\title{Bounding non-rationality of divisors on 3-fold {Fano fibrations}}
\author{Caucher Birkar and Konstantin Loginov}
\date{} 
\newcounter{cthm}
\newtheorem{proposition}[equation]{Proposition}
\newtheorem{thm}[equation]{Theorem}
\newtheorem{lem}[equation]{Lemma}
\theoremstyle{definition}
\newtheorem{remark}[equation]{Remark}
\newtheorem{question}[equation]{Question}
\newtheorem{exam}[equation]{Example}
\theoremstyle{exam}
\newcommand{\OOO}{\mathscr{O}}
 \newcommand{\PP}{\mathbb P}
 \newcommand{\Q}{\mathbb Q}
 \newcommand{\R}{\mathbb R}
 \newcommand{\bir}{\dashrightarrow}
 \newcommand{\rddown}[1]{\left\lfloor{#1}\right\rfloor} 
\newcommand{\Addresses}{{
  \bigskip
  \footnotesize
  
  \  
  
  \textsc{Yau Mathematical Sciences Center,  
  Tsinghua University, Hai Dian District, Beijing, China, Post Code: 100084 }\\
  \textit{E-mail:} \texttt{c.birkar@dpmms.cam.ac.uk}
  
  \
  
  \textsc{Steklov Mathematical Institute of Russian Academy of Sciences, Moscow, Russia; Laboratory of Algebraic Geometry, HSE University, Russian Federation; Laboratory of AGHA, Moscow Institute of Physics and Technology. }\\
  \textit{E-mail:} \texttt{kostyaloginov@gmail.com}
}}
\begin{document}

\maketitle

\begin{abstract}
In this paper we investigate non-rationality of divisors on 3-fold log Fano fibrations $(X,B)\to Z$ under mild conditions. We show that if $D$ is a component of $B$ with coefficient $\ge t>0$ which is contracted to a point on $Z$, then $D$ is birational to $\mathbb{P}^1\times C$ where $C$ is a smooth projective curve with gonality bounded depending only on $t$. Moreover, if $t>\frac{1}{2}$, then genus of $C$ is bounded depending only on $t$.
\end{abstract}


\section{Introduction}

We work over an algebraically closed field $\mathbb{k}$ of characteristic $0$. It is well-known that a smooth del Pezzo surface, which is automatically rational, 
can degenerate into a non-rational singular del Pezzo surface. Indeed, we can pick a smooth hypersurface in $\mathbb{P}^3$ of 
degree $3$ which is a smooth del Pezzo surface degenerating into a cone over an elliptic curve which is a non-rational singular 
del Pezzo. This happens because the degeneration has log canonical (lc) 
but not Kawamata log terminal (klt) singularities.  

The aim of this paper is to investigate the above phenomenon in greater depth. More precisely, 
we consider log Fano fibrations from 3-folds and 
try to understand how far the components of the special fibres can be from being rational. Unlike the example 
above we allow the special fibres to be non-reduced and reducible.  

Let $f\colon X\to Z$ be a klt Fano fibration, that is, $f$ is a contraction, 
$X$ has klt singularities, and $-K_X$ is ample over $Z$. 
By contraction we mean a projective morphism with connected fibres. 
Let $F$ be a fibre of $f$ (over a closed point) and 
let its reduction be $F_{\rm red}$ with irreducible components $F_i$. 
Then $F_{\rm red}$ is rationally chain connected; moreover, if $F$ is a general fibre, then $F=F_{\rm red}$ is 
rationally connected \cite{Zh06}\cite{HM07}. We then focus on the special fibres and 
would like to understand how far the components $F_i$ are from being rational. 
Since $F_{\rm red}$ is rationally chain connected, $F_i$ are uniruled.

In this paper we treat the case when $X$ is a 3-fold and $\dim Z>0$. So $f$ is either birational or a 
conic bundle or a del Pezzo fibration. If $F_i$ is a point, then there is nothing to say. If 
$F_i$ is a curve, then it is a rational curve. So we naturally focus on the case when $F_i$ is a 
surface in which case $F_i$ is birational to $\mathbb{P}^1\times C_i$ 
for some smooth projective curve $C_i$. The question is then how far $C_i$ is from being rational. 
One way to measure this is to ask how large the genus of $C_i$ is. Another way is to ask how large its gonality is.  

{In the simplest case, that is, when $X$ is smooth (or more generally terminal Gorenstein) and $X\to Z$ is a del Pezzo fibration  and $F$ is irreducible, one can easily show  that $F$, which is automatically reduced in this case, is either rational or isomorphic to a {generalised} cone over an elliptic curve (cf. \cite[1.3]{Lo19}). In other words, $F$ is birational to $\mathbb{P}^1\times C$ where $C$ is either a smooth rational curve or an elliptic curve. Moreover, one can describe the local structure of such fibrations in the neighbourhood of non-rational fibers. It turns out that germs of such fibrations with very mild singularities and non-rational irreducible special fibre are in one-to-one correspondence with germs of smooth del Pezzo fibrations that admit an action of a cyclic group, see \cite[2.4, 3.4]{Lo19}. This shows that the non-rational fibers of terminal Gorenstein del Pezzo fibrations form a very restricted class. On the other hand, if we allow $X$ to have arbitrary klt singularities then both the genus and the gonality of $C$ may be arbitrarily large, see Example \ref{exa-role-of-t}.} 

{A related problem was treated in \cite{BCDP19}. There the authors considered groups of birational self-maps of 
a fixed projective threefold $X$. More precisely, they studied the birational type of surfaces contracted by these self-maps. Such surfaces are birational to $\mathbb{P}^1\times C$. It is shown that the genus of $C$ is unbounded if and only if $X$ is birational to a conic bundle or a fibration whose generic fibre is a del Pezzo surface of degree $3$. Similarly, the gonality of the curve is unbounded if and only if $X$ is birational to a conic bundle. The following question is proposed in \cite[4.6]{BCDP19}: is there an integer $g \geq 1$ such that for each 3-fold Mori fibre space $X\to Z$ onto a smooth curve, 
a reduced fibre is birational to $\mathbb{P}^1\times C$ for some curve $C$ of genus (respectively gonality) $\leq g$? 
Here the authors assume $X$ has at most terminal $\mathbb{Q}$-factorial singularities (in this paper we work 
with Mori fibre spaces with klt singularities, see \ref{def-MFS}). 

There are also other ways to measure how far a given algebraic variety is from being rational, see \cite{BPELU17}.}\\

Our first general result concerns the non-rationality of fibres in del Pezzo fibrations.

\begin{thm}
\label{the_theorem}
Fix a positive real number $t$. Assume that $f\colon X\to Z$ is a klt Fano fibration  
where $\dim X=3$ and $\dim Z=1$. Assume $F$ is an irreducible fibre and that $(X, tF_{\rm red})$ is lc. Then 
\begin{enumerate}
\item  $F_{\rm red}$ is birational to $\mathbb{P}^1\times C$ where $C$ is a smooth projective curve 
with gonality ${\mathrm{gon}(C)}$ bounded depending only on $t$;

\item  if $t>\frac{1}{2}$, then  the genus $g(C)$ is bounded;

\item  if $t=1$, then the genus $g(C)\leq 1$.
\end{enumerate}
\end{thm}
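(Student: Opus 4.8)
The plan is to run an adjunction argument on the fibre $F_{\rm red}$ inside $X$, reducing questions about $C$ to the geometry of a log Fano surface. First I would take a log resolution of $(X, tF_{\rm red})$ and, using that $(X, tF_{\rm red})$ is lc, analyze the different $B_{F^\nu}$ on the normalization $F^\nu$ of $F_{\rm red}$ coming from adjunction $(K_X + F_{\rm red})|_{F^\nu} = K_{F^\nu} + B_{F^\nu}$. The key point is that $-K_X$ is ample over $Z$ and $F$ is a fibre over a point of the curve $Z$, so $K_X \cdot (\text{curves in } F)$ is controlled; combined with $F_{\rm red} \leq F$ (as divisors, with $F = mF_{\rm red}$ possibly non-reduced of multiplicity $m\ge 1$) and the lc hypothesis with coefficient $t$, one gets that $F^\nu$ is a log del Pezzo type surface, i.e. $-(K_{F^\nu} + B_{F^\nu})$ is big or at least $F^\nu$ is rationally connected in a suitably bounded way. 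Since $F^\nu$ is birational to $\PP^1 \times C$ with $C$ the image of the maximal rationally connected fibration, the curve $C$ appears as (the normalization of) the base of a fibration $F^\nu \to C$ whose general fibre is $\PP^1$, or equivalently $C$ is an ``irrationality'' invariant of $F^\nu$.

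**Bounding the gonality (part 1, and the engine for 2 and 3).**

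For part (1), the strategy is: on the minimal resolution or on a suitable model $\tilde F \to C$, the non-rationality of $F^\nu$ forces the coefficients of $B_{F^\nu}$ along the ``vertical'' components (components dominating $C$) to be bounded below in terms of $t$ — heuristically, a curve of high gonality sitting in $C$ contributes a boundary divisor on $F^\nu$ with large coefficient, which would violate the lc condition unless $t$ is small. More precisely I would push the fibration structure $F^\nu \dashrightarrow C$ to a morphism after blowing up, look at a general fibre $\ell \cong \PP^1$ of this $\PP^1$-fibration, and compute the intersection of $\ell$ with $K_X + tF_{\rm red}$; the adjunction / lc bound then translates into an inequality of the form $\mathrm{gon}(C) \leq \varphi(t)$, because a section or multisection argument relates $\mathrm{gon}(C)$ to intersection numbers of $-K_X$ with curves in $F$, which are bounded by boundedness of Fano type surfaces (or by a direct Riemann–Roch estimate on the del Pezzo-type surface $F^\nu$). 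I would invoke boundedness of $\epsilon$-lc log del Pezzo surfaces (BAB-type results) with $\epsilon$ depending on $t$ to control everything at once.

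**The genus bounds (parts 2 and 3), and the main obstacle.**

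For (2) and (3), when $t > \tfrac12$ the coefficient of $F_{\rm red}$ in $K_X + tF_{\rm red}$ is large enough that adjunction forces the different $B_{F^\nu}$ to have components of coefficient $> \tfrac12$ at the places lying over the singular locus of $X$ along $F$; running an MMP on $F^\nu$ (it is of Fano type up to the boundary) and tracking where the boundary goes, one shows that the $\PP^1$-fibration $F^\nu \to C$ can have at most boundedly many ``bad'' fibres, and the genus of $C$ is controlled by a Hurwitz-type count. For $t = 1$ the boundary $B_{F^\nu}$ is reduced and $(F^\nu, B_{F^\nu})$ is lc with $-(K_{F^\nu}+B_{F^\nu})$ nef and big; such a pair is very restricted and the base $C$ of its $\PP^1$-fibration must have genus $\le 1$ — this is exactly the classical ``cone over an elliptic curve'' phenomenon from the introduction, now for the possibly-reducible, possibly-non-reduced case. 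The main obstacle I expect is the passage from $X$ to $F^\nu$: because $F$ may be non-reduced ($F = mF_{\rm red}$) and $F_{\rm red}$ may be singular or non-normal, one must carefully compare $(X, tF_{\rm red})$ with the pair obtained by adjunction and control the correction terms (the ``multiplicity $m$'' and the non-normal locus) in a way that is uniform in $t$; getting the lc-to-$\epsilon$lc conversion on $F^\nu$ with $\epsilon = \epsilon(t)>0$, so that boundedness results apply, is the technical heart of the argument.
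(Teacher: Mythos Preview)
Your approach to case (iii), $t=1$, is correct and matches the paper: adjunction on the normalization $F^\nu$ gives an lc log Calabi--Yau surface pair, and Lemma~\ref{lc_log_cy_bounded} finishes it.

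For (i) and (ii), however, there is a genuine gap. You correctly identify the obstacle at the end --- ``getting the lc-to-$\epsilon$lc conversion on $F^\nu$ with $\epsilon=\epsilon(t)>0$'' --- but you offer no mechanism for it, and in fact no such conversion is available. When $t<1$ the pair $(X,F_{\rm red})$ need not be lc, so the different $B_{F^\nu}$ from adjunction with coefficient $1$ can have coefficients exceeding $1$; and there is no adjunction formula with coefficient $t<1$ that produces a pair on $F^\nu$. Consequently BAB-type boundedness of $\epsilon$-lc del Pezzo surfaces cannot be invoked, and the heuristic ``high gonality forces large boundary coefficient, violating lc'' does not become a proof. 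The intersection-number sketch with a general fibre $\ell$ of $F^\nu\to C$ also does not bound $\mathrm{gon}(C)$: $\ell$ lives on $F^\nu$, not on $X$, and its image in $X$ carries no a priori relation to the gonality of $C$.

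The paper does \emph{not} work on $F^\nu$ for $t<1$. Theorem~\ref{the_theorem} is deduced from Theorem~\ref{the_theorem-general} (equivalently~\ref{the_theorem-general-lcy}), whose proof runs as follows: pass to a $\Q$-factorial dlt model $(Y,B_Y)\to Z$ on which the centre $T$ of $D=F_{\rm red}$ lies in $\lfloor B_Y\rfloor$ (Lemma~\ref{l-Q-fact-dlt-model-2}); if $\dim T=2$ one is back in case (iii); if $\dim T=1$, the key Lemma~\ref{plt_bounded} extracts $D$ with $\rho=1$ and bounds the number of components of the general fibre of $D\to T$ in terms of $t$ (and shows it is $1$ when $t>\tfrac12$), after which one restricts to a component $S$ of $\lfloor B_Y\rfloor$ containing $T$ and applies the \emph{surface} result Theorem~\ref{t-general-lcy-dim-2} to bound $\mathrm{gon}(T)$ (and $g(T^\nu)$ when $t>\tfrac12$); if $\dim T=0$, one runs the MMP of~\S\ref{construction_mmp} from a log resolution down to $Y$ and tracks the centre of $D$ through the steps, again reducing to the surface theorem. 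So the engine is not boundedness of del Pezzo surfaces but rather (a) the extraction lemma controlling fibre components, and (b) the two-dimensional log Calabi--Yau statement applied on a component of $\lfloor B_Y\rfloor$.
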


We will see below in Example \ref{exa-role-of-t} that without the assumption 
that $(X, tF_{\rm red})$ is lc, both $g(C)$ and ${\mathrm{gon}(C)}$ can be arbitrarily large.  
In Example \ref{exa-role-of-t2} we see that even assuming $(X, tF_{\rm red})$ is lc, 
$g(C)$ can be arbitrarily large when $t\le \frac{1}{2}$. 

Theorem \ref{the_theorem} is expected to be especially useful on Fano fibrations with good singularities. 
Indeed if $X\to Z$ is a Fano fibration from a 3-fold onto a smooth curve where $X$ has 
$\epsilon$-lc singularities with $\epsilon>0$ (e.g., terminal or canonical singularities), 
then a conjecture of Shokurov (cf. \cite{B16}) predicts that $(X,tF)$ is lc for any fibre $F$ and for some 
$t>0$ depending only on $\epsilon$. In particular, if $F$ is irreducible, then the above theorem 
says that in this case (assuming Shokurov's conjecture) $F_{\rm red}$ is birational to 
$\PP^1\times C$ where $C$ is a smooth projective curve with gonality bounded depending only on $\epsilon$.
In particular, Shokurov's conjecture implies that the question of \cite{BCDP19} mentioned above 
has an affirmative answer in the sense that the relevant gonality is bounded since in their setting 
the total space has terminal singularities.

Case (iii) of Theorem \ref{the_theorem} is much simpler than the other cases. We outline the proof here. 
By assumption, $t=1$ and that $(X,F_{\rm red})$ is lc. 
Let $S$ be the normalization of $F_{\rm red}$. By adjunction we can write   
$K_{S}+\Delta_{S}= (K_X+F_{\rm red})|_{S}$ where $(S,\Delta_S)$ is an lc log Fano pair because 
$-K_X$ is ample over $Z$ and $F_{\rm red}$ is numerically trivial over $Z$ as $F$ is irreducible. 
If $({S},\Delta_S)$ is klt, then $S$ is rational by \cite{Zh06}. 
Hence we may assume that this pair is not klt. We have 
\[
K_S+\Delta_S + H\sim_{{\mathbb{Q}}} 0
\]
for some ample $\mathbb{Q}$-divisor $H$ such that $(S,\Delta_S+H)$ is lc. 
Thus $(S, \Delta_S + H)$ is an lc log Calabi-Yau surface {pair}.
The rest of the proof follows from taking a dlt model and running MMP, 
see Lemma \ref{lc_log_cy_bounded}. 

Theorem \ref{the_theorem} is a consequence of the next far more general 
statement in which we allow the fibres to be non-reduced and reducible, $Z$ can be a curve, a surface, 
or a 3-fold, and we work with Fano pairs. 

A \emph{dlt Fano fibration} $f\colon (X,B)\to Z$ consists of a dlt pair $(X,B)$ and a contraction $f\colon X\to Z$ 
such that $-(K_X+B)$ is ample over $Z$.

\begin{thm}
\label{the_theorem-general}
{Fix a positive real number $t$.  Assume that $f\colon  (X,B)\to Z$ is a dlt Fano fibration where $\dim X=3$ and 
$\dim Z\ge 1$. Assume $D$ is a component of $B$ with coefficient $\ge t$ contracted to a point on $Z$.} 
Then 
\begin{enumerate}
\item $D$ is  birational to $\mathbb{P}^1\times C$ where $C$ is a smooth projective curve 
with gonality ${\mathrm{gon}(C)}$ bounded depending only on $t$; 

\item if $t>\frac{1}{2}$, then the genus $g(C)$ is bounded depending only on $t$; 

\item if $t=1$, then $g(C)\le 1$.
\end{enumerate}

\end{thm}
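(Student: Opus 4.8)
\emph{Proof strategy.} I would first note that the birational statement itself is immediate: $D$ is a component of the fibre of the Fano fibration $f$ over the point $f(D)=z$, hence uniruled (as recalled in the introduction), and a uniruled projective surface is birational to a $\mathbb{P}^1$-bundle over a uniquely determined smooth curve $C$. So the whole content of the theorem is the bound on $\mathrm{gon}(C)$, together with the bounds on $g(C)$ in (ii) and (iii); in particular we are free to replace $X$, $B$ and $D$ by any convenient birational models over $Z$.

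The plan is to transfer the problem to a two-dimensional log pair on a model of $D$ and then recover the invariants of $C$ from that pair, the number $t$ entering through a uniform lower bound $\delta(t)>0$ on the relevant boundary coefficients. After preliminary reductions --- a $\mathbb{Q}$-factorial dlt model of $(X,B)$, and (when $\dim Z=2$) an MMP over $Z$ to make $D$ behave like a component of a fibre --- the first step is to arrange that $D$ occurs with coefficient exactly $1$. If $b\in[t,1]$ is the coefficient of $D$ in $B$ and the lc threshold of $D$ over $(X,B)$ is at least $1-b$, one simply replaces $(X,B)$ by $(X,B+(1-b)D)$; otherwise one extracts, by a dlt-type modification adapted to $D$, the divisor over $D$ witnessing the failure, and here the hypothesis ``coefficient $\ge t$'' is really used, to keep the surviving coefficients $\ge\delta(t)$. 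Write $S$ for the normalization of (the image of) $D$ in the resulting model $(X',B')$. Divisorial adjunction gives $K_S+B_S=(K_{X'}+B')|_S$ with $(S,B_S)$ lc, and since $D$ lies in the fibre of $f$ over the single point $z$ while $-(K_{X'}+B')$ is ample over $Z$, the divisor $-(K_S+B_S)$ is nef and big. Thus $(S,B_S)$ is an lc weak log Fano surface pair, birational to the same $\mathbb{P}^1\times C$ as $D$, whose nonzero boundary coefficients inherited from $B$ are $\ge\delta(t)$.

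Now I would run the standard dichotomy on $(S,B_S)$. If it is klt, then $-K_S$ is big and $S$ is a klt surface, hence rationally connected, hence rational by \cite{Zh06}, so $C\cong\mathbb{P}^1$ and there is nothing to prove. If it is not klt, pick an ample $\mathbb{Q}$-divisor $H$ with $K_S+B_S+H\sim_{\mathbb{Q}}0$ and $(S,B_S+H)$ lc, making $(S,B_S+H)$ an lc log Calabi-Yau surface pair; passing to a dlt model and running an MMP as in Lemma \ref{lc_log_cy_bounded} either shows $S$ rational or produces a Mori fibre space $S'\to C$ realizing the ruling, together with the transform of the non-klt structure as a boundary divisor on $S'$. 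A horizontal boundary component $\Gamma$ of $S'$ then satisfies $\Gamma\cdot\ell\le (B_{S'}\cdot\ell)/\delta(t)\le 2/\delta(t)$ against a general ruling fibre $\ell$, because $K_{S'}\cdot\ell=-2$ and $-(K_{S'}+B_{S'})\cdot\ell\ge 0$ force $B_{S'}\cdot\ell\le 2$; so $\Gamma\to C$ is finite of bounded degree. Using a bounded-index log Calabi-Yau (complement) structure on $S'$, the genus of $\Gamma^\nu$ is also bounded, and then pushing a pencil on $\Gamma^\nu$ forward under $\Gamma^\nu\to C$ yields a pencil of bounded degree on $C$; this is the gonality bound (i). When $t=1$ we are exactly in the outline following Theorem \ref{the_theorem}: $b=1$, the adjunction is the dlt adjunction, $(S,B_S+H)$ is lc log Calabi-Yau, and Lemma \ref{lc_log_cy_bounded} gives $g(C)\le 1$, which is (iii). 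When $\tfrac12<t<1$, the slack $1-b<\tfrac12$ prevents the delicate coefficients from being tangent to too high an order without breaking log canonicity, which forces $\Gamma\to C$ to have relative degree bounded in terms of $t$; combined with the genus bound for $\Gamma^\nu$ and Riemann-Hurwitz, this bounds $g(C)$ and gives (ii).

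The main obstacle is twofold. First, the bookkeeping behind ``raising the coefficient of $D$ to $1$'': naively raising it destroys log canonicity, so one must stay below the lc threshold or pass to a dlt modification, all the while tracking how the lower bound $\delta(t)$ propagates --- and, separately, dealing with $\dim Z=2$, where $D$ is not literally part of a fibre divisor. Second, and more serious, is producing an honest \emph{pencil} of bounded degree on $C$: the irregularity $g(C)$ is controlled once the boundary degrees are bounded, but the gonality statement in (i) --- which must survive even for very small $t$, where $g(C)$ can be arbitrarily large --- requires an actual low-degree map $C\to\mathbb{P}^1$, and it is in manufacturing this map from the ruled surface $S'$ and its log Calabi-Yau structure that the boundary coefficients and complement bounds have to be used most delicately.
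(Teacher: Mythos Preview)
Your approach works cleanly for part (iii): when $t=1$ the coefficient of $D$ is already $1$, adjunction to the normalization $S$ of $D$ gives an lc weak log Fano surface pair, and Lemma \ref{lc_log_cy_bounded} finishes exactly as you and the paper both describe. The trouble is in (i) and (ii), where two independent gaps appear.

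First, the step ``raise the coefficient of $D$ to $1$'' cannot work as stated. If the lc threshold of $D$ with respect to $(X,B)$ is less than $1-b$, extracting the witnessing divisor $E$ does produce a reduced boundary component, but $E$ is not birational to $D$ --- its centre is a curve or a point inside $D$ --- so adjunction to $E$ tells you about the wrong surface, while the birational transform of $D$ still has coefficient $b$. More fundamentally, if your reduction did succeed then your dichotomy (klt $\Rightarrow$ $S$ rational; non-klt $\Rightarrow$ Lemma \ref{lc_log_cy_bounded} gives $g(C)\le 1$) would prove $g(C)\le 1$ for \emph{every} $t>0$, contradicting Example \ref{exa-role-of-t2}, where $t=\tfrac12$ and $g(C)$ is unbounded. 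So the obstruction you flag as ``bookkeeping'' is in fact an essential failure of the strategy when $t<1$.

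Second, the gonality argument runs in the wrong direction. You produce a horizontal curve $\Gamma$ on a ruled surface $S'\to C$, giving a finite map $\Gamma^\nu\to C$ of bounded degree. Even granting that $\mathrm{gon}(\Gamma^\nu)$ is bounded, a low-degree pencil on $\Gamma^\nu$ does not ``push forward'' to one on $C$: a hyperelliptic curve can dominate a non-hyperelliptic one. To bound $\mathrm{gon}(C)$ you need a bounded-degree map \emph{from} $C$ to a curve of bounded gonality, not \emph{to} $C$.

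The paper's route is genuinely different and supplies exactly this. Rather than forcing $D$ into $\lfloor B\rfloor$, it passes (Lemma \ref{l-Q-fact-dlt-model-2}) to a model $(Y,B_Y)$ on which the \emph{centre} $T$ of $D$ --- possibly a curve or a point, with $D$ itself perhaps contracted --- lies inside $\lfloor B_Y\rfloor$. When $\dim T=1$, Lemma \ref{plt_bounded} extracts $D$ by a single extremal blow-up and shows that the general fibre of $E=D\to T$ has boundedly many components; Stein factorisation then gives a map $C\to T$ of bounded degree (the correct direction), and the two-dimensional Theorem \ref{t-general-lcy-dim-2}, applied via adjunction to a component $S\subset\lfloor B_Y\rfloor$ containing $T$, bounds $\mathrm{gon}(T)$ and, for $t>\tfrac12$, the genus of $T^\nu$. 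The case $\dim T=0$ is handled by an auxiliary MMP (\S\ref{construction_mmp}) that tracks the first moment the centre drops dimension. The idea you are missing is precisely this: do not adjoin to $D$; adjoin to an ambient reduced boundary component containing the centre of $D$, and recover $C$ through the fibration $D\to T$.
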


We now reformulate the theorem in the language of log Calabi-Yau fibrations (as in \cite{B18}) 
which is more convenient for proofs 
and puts the problems discussed in this paper in a more general context.

\begin{thm}
\label{the_theorem-general-lcy}
{Fix a positive real number $t$.  Assume that $(X,B)\to Z$ is a Fano type log Calabi-Yau fibration where $\dim X=3$ and 
$\dim Z\ge 1$. Assume $D$ is a component of $B$ with coefficient $\ge t$ contracted to a point on $Z$.} 
Then 
\begin{enumerate}
\item $D$ is  birational to $\mathbb{P}^1\times C$ where $C$ is a smooth projective curve 
with gonality ${\mathrm{gon}(C)}$ bounded depending only on $t$; 

\item if $t>\frac{1}{2}$, then the genus $g(C)$ is bounded depending only on $t$; 

\item if $t=1$, then $g(C)\le 1$.
\end{enumerate}

\end{thm}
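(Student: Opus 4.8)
The plan is to prove Theorem~\ref{the_theorem-general-lcy} directly; the other two theorems are reformulations or easy consequences (one passes from a dlt Fano fibration by adding a general ample $\Q$-divisor to $B$, and from a klt Fano fibration $X\to Z$ with $(X,tF_{\rm red})$ lc by taking a dlt modification of $(X,tF_{\rm red})$ followed by an MMP over $Z$; if $(X,tF_{\rm red})$ is already klt then $F_{\rm red}^{\nu}$ is rational by \cite{Zh06}). So let $(X,B)\to Z$ be a Fano type log Calabi-Yau fibration with $\dim X=3$ and $K_X+B\simQ 0$ over $Z$, and let $D$ be a component of $B$ with coefficient $b\ge t$ contracted to a point $z\in Z$. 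If $\dim D\le 1$ then $D$ is a point or a rational curve, so assume $D$ is a prime divisor; it is then a component of the fibre $F_z=f^{-1}(z)$, and shrinking $Z$ near $z$ we may assume $K_X+B\simQ 0$. Since $F_z$ is rationally chain connected, $D$ is uniruled, so a $K$-MMP on a smooth model of $D$ produces a Mori fibre space: either its base is a point, so $D$ is rational and we take $C=\PP^1$, or it is a ruled surface $S'\to C$ over a smooth projective curve $C$, which we may assume has $g(C)\ge 1$. We must bound $\mathrm{gon}(C)$ in terms of $t$, and $g(C)$ when $t>\tfrac12$.

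The next step is to restrict the log Calabi-Yau structure to $D$. Let $\nu\colon S\to D$ be the normalization and write $B=bD+B'$ with $D\not\subseteq\operatorname{Supp}B'$. Since $D$ is a component of the fibre, $-D|_{D}$ is $\Q$-linearly equivalent to an effective divisor, and divisorial adjunction yields an \emph{effective} boundary $\Delta_S\ge 0$ with
\[
K_S+\Delta_S\simQ 0, \qquad \Delta_S=\operatorname{Diff}_S(0)+\nu^{*}(B'|_{D})+(1-b)\,\nu^{*}(-D|_{D}).
\]
The essential difficulty is that $(S,\Delta_S)$ need \emph{not} be log canonical when $b<1$: the term $(1-b)\nu^{*}(-D|_{D})$ may have coefficient $c>1$ along a curve $C_0\subseteq S$, and this failure of log canonicity is precisely what allows $g(C)$ to grow, cf.\ Example~\ref{exa-role-of-t}. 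Passing to a birational model over $C$ and intersecting $\Delta_S$ with a general fibre of the ruling bounds $c$ (by $2$) and shows $C_0$ is either a fibre of the ruling or a section, so in the interesting case $C_0$ is birational to $C$. Adjunction for $(S,\Delta_S)$ along $C_0$ gives $\deg\bigl((K_S+\Delta_S)|_{C_0}\bigr)=0$, hence $2g(C_0)-2\le -(c-1)\,C_0^{2}$, so $g(C)$ is bounded once $-C_0^{2}$ is bounded; the log canonicity of $(X,B)$ together with $b\ge t$ bounds $-C_0^{2}$, and the way $D$ meets the other components of $F_z$ — but only when $t>\tfrac12$, and Example~\ref{exa-role-of-t2} shows the threshold $\tfrac12$ is sharp. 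When $t=1$ we have $b=1$, the pair $(S,\Delta_S)$ is honestly lc log Calabi-Yau, every coefficient-one component $E$ of $\Delta_S$ satisfies $K_E+\operatorname{Diff}_E\simQ 0$ and hence $g(E)\le 1$, and the dlt-model-and-MMP argument of Lemma~\ref{lc_log_cy_bounded} shows $C$ is either rational or dominated by such an $E$, giving $g(C)\le 1$.

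For the gonality bound, which must hold for every $t>0$, I would invoke boundedness of complements for Fano type fibrations in the sense of \cite{B18}. Replacing $B$ by $tD$ — still lc and of Fano type over $Z$ — there is an integer $n=n(t)$ and an $n$-complement $(X,B^{+})$ of $(X,tD)$ over $z$; thus $(X,B^{+})$ is log Calabi-Yau in a neighbourhood of $f^{-1}(z)$ with $B^{+}\ge tD$ and with coefficients in a finite set depending only on $n$. Restricting to $S$ as above gives a log Calabi-Yau surface pair $(S,\Delta_S^{+})$ (again possibly non-lc) with coefficients in a finite set; restricting once more to a general fibre $\PP^{1}$ of $S'\to C$ on a common resolution, its horizontal part has degree $2$ there, so it is carried by at most $2n$ multisections of $S'\to C$, at least one of which, $M$, has degree $\le 2n$ over $C$. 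Since $g(C)\ge 1$, every irreducible curve on $S'$ that is not a fibre dominates $C$; analysing $M$ inside the ruled surface $S'$, together with a boundedness statement in dimension $\le 2$, produces a pencil on $C$ of degree bounded in terms of $n$, hence $\mathrm{gon}(C)$ bounded in terms of $t$.

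The hardest part is the analysis supporting the last two steps: controlling the non-log-canonical surface pair $(S,\Delta_S)$ — in particular bounding $-C_0^{2}$ from the log canonicity of $(X,B)$ and the inequality $b\ge t$ — and turning bounded multisections on the ruled surface $S'$ into a bounded-degree pencil on $C$. This is where the geometry of $3$-fold Fano fibrations and the precise threshold $t>\tfrac12$ enter in an essential way. One organizes the argument according to the type of $f$ (surface fibration for $\dim Z=1$, conic bundle for $\dim Z=2$, birational contraction for $\dim Z=3$), the del Pezzo fibration case being the most delicate.
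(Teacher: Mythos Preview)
Your treatment of case (iii) matches the paper's: when $t=1$, adjunction to the normalisation $S$ of $D$ yields an honest lc log Calabi--Yau surface pair, and Lemma~\ref{lc_log_cy_bounded} applies (after noting $S$ is uniruled, hence not canonical). For (i) and (ii), however, your route does not close. In (i) you end with a multisection $M$ of $S'\to C$ of degree $\le 2n$ over $C$; but a bounded-degree cover $M\to C$ gives no control on $\mathrm{gon}(C)$ --- gonality does not descend along finite covers, and there is no ``boundedness statement in dimension $\le 2$'' that converts such an $M$ into a pencil on $C$. What one needs is a map \emph{from} $C$ to something of bounded gonality, and your construction produces the opposite direction. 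In (ii) the assertion that log canonicity of $(X,B)$ together with $b\ge t>\tfrac12$ bounds $-C_0^{2}$ is the entire content of the step; you offer no mechanism, and since your surface pair $(S,\Delta_S)$ is explicitly non-lc there is no adjunction or MMP tool that applies directly.

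The paper avoids restricting to $D$ altogether. Lemma~\ref{l-Q-fact-dlt-model-2} first passes to a $\Q$-factorial dlt Fano type log Calabi--Yau fibration $(Y,B_Y)\to Z$ with $a(D,Y,B_Y)\le 1-t$ and with the centre $T$ of $D$ contained in $\rddown{B_Y}$. If $\dim T=2$ one is back in case (iii). If $\dim T=1$, the key Lemma~\ref{plt_bounded} extracts $D$ as the unique exceptional divisor $E$ of an extremal contraction $Y'\to Y$ and shows that the general fibre of $E\to T$ has a bounded number of components (exactly one when $t>\tfrac12$); Stein factorisation then gives a map $C\to T$ of bounded degree, in the right direction. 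Now adjunction on a component $S\subset\rddown{B_Y}$ containing $T$ produces an \emph{lc} log Calabi--Yau surface fibration $(S,B_S)\to R$ with $\mu_T B_S\ge t$, so Theorem~\ref{t-general-lcy-dim-2} bounds $\mathrm{gon}(T)$ (and $g(T^\nu)$ when $t>\tfrac12$); composing $C\to T\to\PP^1$ finishes. If $\dim T=0$, one runs the MMP of \S\ref{construction_mmp} from a log resolution down to $Y$, tracks the centre of $D$ through the steps, and reduces to the curve case at the last divisorial contraction whose exceptional divisor contains the centre. No complements are used anywhere.
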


The Fano type property is important, indeed, taking a smooth family of 
K3 surfaces over a curve one sees that already (i) fails since the fibres in such a family are not uniruled.
However, even without the Fano type property we expect some nice behaviour of non-rationality, see 
below for some brief discussions.

An interesting special case of the theorem is when one starts with a klt pair $(Z,B_Z)$ of dimension $3$ and 
asks what kind of divisors appear on resolutions of singularities of the pair. More precisely, 
suppose $\phi\colon W\to Z$ is a log resolution and write $K_W+B_W=\phi^*(K_Z+B_Z)$. 
Let $D$ be a prime divisor on $W$ contracted to a point in $Z$. One can show that $D$ is uniruled, 
e.g. by running some MMP over $Z$, so $D$ is birational to $\PP^1\times C$ where $C$ is a 
smooth projective curve. The question as before is whether the gonality of $C$ (or its genus) is 
bounded. Theorem \ref{the_theorem-general-lcy} says that the gonality is bounded if 
the coefficient of $D$ in $B_W$ is $\ge t$ for some fixed $t>0$ and that the genus is bounded 
if the coefficient is $>\frac{1}{2}$. Indeed running a suitable MMP we get a model $X\to Z$ 
on which $B \ge 0$, where $B$ is the pushdown of $B_W$, and $D$ is not contracted over $X$ 
so that we can apply the theorem to $(X,B)\to Z$ which is a Fano type log Calabi-Yau fibration.
On the other hand, if the coefficient of $D$ in $B_W$ is too small or negative, then 
in general the gonality of $C$ is not bounded.

A case that we have not treated in this paper is when we have a global Fano type 3-fold. That is, suppose 
$(X,B)$ is a projective Fano type log Calabi-Yau pair of dimension 3 and assume $D$ is a component of $B$ 
with coefficient $\ge t$ where $t>0$ is fixed. Unlike in the relative case discussed above, 
$D$ does not need to be uniruled. But this is not the end of the story. For example, if 
$X=\PP^3$ or any Fano variety in a bounded family, then $D$ belongs to a bounded family. {In particular, the degree of irrationality of $D$ is bounded, where the degree of irrationality of a variety $D$ is defined as the least possible degree of dominant rational maps $D\bir \PP^{\dim D}$. This notion generalises the notion of gonality to higher dimensions. The following general question puts this into perspective. 

\begin{question}
\emph{
Fix a positive real number $t>0$ and natural number $d$. 
Suppose that $f\colon (X,B)\to Z$ is a Fano type log Calabi-Yau fibration where $\dim X=d$. 
Assume $D$ is a component of $B$ with coefficient $\ge t$ contracted to a point on $Z$. Is is true that  
there is a rational map $D\bir C$ where the general fibres are rationally connected 
and $C$ is a smooth projective variety with bounded {degree of irrationality}?} 
\end{question}  
Without the Fano type assumption the answer to the question is expected to be negative. 
For example, it is conjectured that {degree of irrationality} of K3 surfaces $F$ are not bounded, see Conjecture 4.2
of \cite{BPELU17}, but they appear as fibres of the log Calabi-Yau fibration $F\times Z\to Z$ 
where the morphism is projection and $Z$ is a smooth curve.  

On the other hand, one may change the question and replace {degree of irrationality} of $C$ with its covering gonality which is defined to be the least gonality of covering families of curves on $C$. The covering gonality of K3 and 
abelian surfaces are $2$ (see \cite{BPELU17} and the references therein).

In the question, even if $D$ is not contracted to a point, one expects some nice properties 
of the general fibres of $D\to f(D)$. A special case of this plays a key role in our proofs, see 
\ref{plt_bounded}.


Finally, the next result is the analogue of Theorem \ref{the_theorem-general-lcy} in dimension $2$. Here we also allow 
the case $\dim Z=0$ and do not assume Fano type property.

\begin{thm}\label{t-general-lcy-dim-2}
Let $u$ be a positive real number. Let $(S, \Delta)\to Z$ be a log Calabi-Yau fibration where $S$ is of dimension $2$.
Assume that the coefficient of a component $D$ of $\Delta$ is $\ge u$ and that $D$ is contracted 
to a point on $Z$. Then
\begin{enumerate}
\item 
the gonality of $D$ is bounded depending only on $u$,
\item
{if $u>\frac{1}{2}$, then the genus $g(D^{\nu})$ of the normalization $D^{\nu}$ of $D$ is bounded depending only on $u$,

\item if $u=1$, then $g(D^{\nu}) \leq 1$,

\item if $\dim Z\ge 1$, then $g(D^{\nu}) \leq 1$.}
\end{enumerate}
\end{thm}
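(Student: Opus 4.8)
The plan is to reduce, via the minimal model program, to a Mori fibre space over $Z$ and then combine adjunction with bend–and–break, the canonical bundle formula, and boundedness.

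\textbf{Step 1: reduction to a Mori fibre space.} First I would replace $(S,\Delta)$ by a $\mathbb Q$-factorial dlt model over $Z$; this affects neither the coefficient of $D$ (taken with its birational transform), nor the property that $D$ is contracted to a point on $Z$, nor the curve $D^\nu$, so we may assume $(S,\Delta)$ is $\mathbb Q$-factorial dlt. Since $K_S+\Delta\sim_{\mathbb Q,Z}0$, every step of every MMP over $Z$ is automatically $(K_S+\Delta)$-trivial, so $(S',\Delta')$ stays lc log Calabi--Yau and $\mathrm{coeff}_{D'}\Delta'\ge u$ throughout. Running the $(-D)$-MMP over $Z$: contracting $D$ itself would require $D^2<0$, impossible for a $(-D)$-negative curve, so either $D$ gets contracted — and then $D^\nu\cong\mathbb P^1$ and we are done — or we arrive at $f'\colon(S',\Delta')\to T$ over $Z$ with $\rho(S'/T)=1$ and the birational transform $D'$ relatively ample over $T$. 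Thus $S'/T$ is a Mori fibre space with $\dim T\in\{0,1\}$. If $\dim Z\ge1$ this last case cannot occur: a relatively ample $D'$ dominates $T$, hence (via the finite map $T\to Z$) dominates $Z$, contradicting that $D'$ maps to a point on $Z$.

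\textbf{Step 2: the easy cases and gonality.} If $u=1$ then $D$ has coefficient $1$ in $\Delta$ and $(K_S+\Delta)|_{D^\nu}\sim_{\mathbb Q}0$ (degree $0$ if $\dim Z\ge 1$), so adjunction gives $K_{D^\nu}+\Theta\sim_{\mathbb Q}0$ with $\Theta\ge0$, hence $g(D^\nu)\le1$: this is the surface analogue of case (iii) of Theorem \ref{the_theorem}, handled as in Lemma \ref{lc_log_cy_bounded}, proving (iii). If $\dim Z\ge1$, Step 1 shows the $(-D)$-MMP over $Z$ either contracts $D$ or ends with $-D$ nef over $Z$, whence $D^2\le0$; combined with $(K_S+\Delta)\cdot D=0$ and $(\Delta-uD)\cdot D\ge0$, adjunction on $D^\nu$ gives $2g(D^\nu)-2\le(1-u)D^2\le0$, proving (iv). So for (i) and (ii) we may assume $\dim Z=0$. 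Then $S'$ is uniruled — a klt del Pezzo of Picard rank $1$ (hence rationally connected) or a conic bundle over a curve — so I would take a covering family $\{C_\lambda\}$ of rational curves on $S'$ with $-K_{S'}\cdot C_\lambda$ minimal; bend–and–break gives $-K_{S'}\cdot C_\lambda\le3$ in the del Pezzo case and $-K_{S'}\cdot C_\lambda=2$ (a general fibre) in the conic bundle case. A general $C_\lambda$ is nef and is either a component of $D'$ (so $D^\nu\cong\mathbb P^1$) or meets $D'$ properly in $C_\lambda\cdot D'\le\tfrac1u(-K_{S'})\cdot C_\lambda\le\tfrac3u$ points, using that $\tfrac1u(-K_{S'})-D'\sim_{\mathbb Q}\big(\tfrac{\mathrm{coeff}_{D'}\Delta'}{u}-1\big)D'+\tfrac1u\big(\Delta'-\mathrm{coeff}_{D'}(\Delta')\,D'\big)\ge0$. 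By the covering-family argument used in \ref{plt_bounded} this bounds $\mathrm{gon}(D^\nu)=\mathrm{gon}(D'^\nu)$ by $\tfrac3u$, proving (i).

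\textbf{Step 3: the genus bound for $u>\tfrac12$.} Still $\dim Z=0$. Adjunction on $S'$ gives $2g(D^\nu)-2\le(1-u)D'^2$, so it suffices to bound $D'^2$ above in terms of $u$ (when $D'^2>0$). If $S'/T$ is a conic bundle, the canonical bundle formula gives $0\sim_{\mathbb Q}K_{S'}+\Delta'\sim_{\mathbb Q}f'^*(K_T+B_T+M_T)$ with $B_T\ge0$ and $M_T$ pseudo-effective, so $K_T+B_T+M_T\sim_{\mathbb Q}0$ and $g(T)\le1$; if $D'$ is a section then $g(D^\nu)=g(T)\le1$ and we are done, and if $D'$ is a multisection while $\Delta'$ has no further horizontal component, then $D'^2\le\tfrac1{u^2}(-K_{S'})^2\le\tfrac8{u^2}$ since $(-K_{S'})^2\le8$ for a conic bundle over a curve of genus $\le1$. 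In the remaining subcases — a multisection $D'$ in the presence of another horizontal component, and the case $T=\mathrm{pt}$ where $S'$ is a klt del Pezzo of Picard rank $1$ with $D'$ ample — I would instead invoke boundedness: $(S',uD')$ is lc with $-(K_{S'}+uD')$ pseudo-effective, and when $u>\tfrac12$ this forces the pair $(S',\Delta')$, and hence $D'^2$, into a bounded family (this is the role of Lemma \ref{lc_log_cy_bounded}); then adjunction bounds $g(D^\nu)$, proving (ii).

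\textbf{Main obstacle.} The serious point is the last one: proving that $\mathrm{coeff}_{D'}\Delta'>\tfrac12$ — and not merely $>0$ — suffices to bound $D'^2$ on a klt del Pezzo of Picard rank $1$, i.e. to pin the pair $(S',\Delta')$ into a bounded family. This is precisely the threshold separating (ii) from (i); its sharpness is exactly what Example \ref{exa-role-of-t2} is designed to exhibit, and once it is in place, everything else reduces to adjunction, bend–and–break, and the canonical bundle formula.
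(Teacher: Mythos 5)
Your overall scaffolding (reduce to a Mori fibre space, handle $u=1$ and $\dim Z\ge 1$ by adjunction, bound intersection numbers with fibres for gonality) is in the same spirit as the paper, but at the decisive step for part (ii) the argument is not actually carried out, and you say so yourself.

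The structural divergence from the paper matters here. The paper first passes to the \emph{minimal resolution} and then runs an MMP on $K_S$; since a $K_S$-MMP of a smooth surface is a chain of blow-downs, the end product $S\to V$ has $S$ smooth, so $S$ is $\mathbb P^2$, a Hirzebruch surface $\mathbb F_n$, or a minimal ruled surface over an elliptic curve. That explicit geometry is precisely what makes Steps 5 and 6 of the paper work: on $\mathbb F_n$ one writes $D\sim as+bf$ and bounds $a$, then $n$, then $b$; over an elliptic base one uses the Hacon--McKernan--Xu non-klt-centre trick to bound $\epsilon^2D^2\le 16$, again using that $S$ is a smooth ruled surface. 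You instead take a $\mathbb Q$-factorial dlt model and run a $(-D)$-MMP over $Z$, which gives a Mori fibre space $S'\to T$ on which $D'$ is relatively ample but $S'$ is an arbitrary klt surface (e.g.\ $\mathbb P(1,1,n)$). That trades a concrete classification for the need to bound a klt log Calabi--Yau surface pair of Picard rank one, and this bound you do not establish.

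Concretely, at the end of Step 3 you write that $(S',uD')$ lc with $-(K_{S'}+uD')$ pseudo-effective and $u>1/2$ ``forces the pair $(S',\Delta')$, and hence $D'^2$, into a bounded family (this is the role of Lemma \ref{lc_log_cy_bounded})''. Lemma \ref{lc_log_cy_bounded} does not say this: it says an lc log Calabi--Yau surface that is not canonical is birational to $\mathbb P^1\times C$ with $g(C)\le 1$; it gives no bound whatsoever on $D'^2$ or on the family of pairs. On a rank-one klt del Pezzo one has $D'\equiv c(-K_{S'})$ with $uc\le 1$, so $D'^2=c^2K_{S'}^2$; since $K_{S'}^2$ is \emph{unbounded} over klt del Pezzo surfaces of Picard rank one, a genuine argument is needed to explain why the lc log Calabi--Yau condition together with $u>\frac12$ kills this unboundedness. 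The paper's argument does exactly this, but it relies on the smoothness of $S$ obtained from the minimal resolution; your route forfeits that crutch and replaces it with an unproved boundedness claim. The ``Main obstacle'' paragraph at the end of your write-up is an honest acknowledgment that this is where the proof stops, rather than a proof of the step. The same unresolved appeal to ``boundedness'' appears in the conic-bundle case when $D'$ is a multisection and $\Delta'$ has further horizontal components; the paper instead handles this on the nose for $\mathbb F_n$ and for minimal ruled surfaces over an elliptic curve. Finally, a minor point: in the $(-D)$-MMP the divisor $D$ can never be contracted (a divisorial contraction of $D$ would require $D^2<0$, while the contracted ray must have $D\cdot R>0$), so the first sentence of your Step 1 is internally inconsistent, and the case where the MMP ends with a minimal model on which $-D$ is nef (rather than an MFS) should be added explicitly, even though it is easily dispatched via $D^2\le 0$ and adjunction.
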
 

{
 We will also give sharp bounds for the gonality and genus in the theorem;  
more precisely, we show that $\mathrm{gon}(D) \leq 3/u - 1$, and in case $u>1/2$, we show that 
$g(D^\nu)\leq \frac{1}{u-1/2} - 1$.
For more details, see Proposition \ref{p-dim2-effective-bnd}.}

\subsection*{Acknowledgements}
The first author was supported by a grant of the Royal Society and did this work at the University of Cambridge. {The second author was partially supported by the HSE University Basic Research Program, Russian Academic Excellence Project ``5-100'', Foundation for the Advancement of Theoretical Physics and Mathematics ``BASIS'', and the Simons Foundation. {The work on this paper was initiated while the second author visited the University of Cambridge. He thanks it for hospitality.} He also thanks Artem Avilov and Yuri Prokhorov for useful discussions, Ivan Cheltsov and {Alexei Gorinov} for making his visit to the UK possible.}


\section{Examples}

In this section we present several examples to show what happens if we drop some of the conditions of the main 
results.

\begin{exam}\label{exa-role-of-t}
We present an example to show that Theorem \ref{the_theorem} fails without the assumption 
that $(X, tF_{\mathrm{red}})$ is lc (and similarly Theorem \ref{the_theorem-general} fails without assuming the 
coefficient of $D$ in $B$ is $\ge t$). Let $X'=\mathbb{P}^2\times Z\to Z$ where 
$Z$ is a smooth curve and the morphism is projection. Let $F'$ be a fibre, say over a 
closed point $z$. 
Take a general ample divisor $H'\ge 0$ on $X'$ not containing $F'$ so that $H'|_{F'}$ is a smooth curve $C'$. 
Consider $B'=aF'+bH'$ where $a<1$ is close to $1$ but $b$ is small so that 
$a+b>1$, $(X',B')$ is klt, and $-(K_{X'}+B')$ is ample over $Z$. 
The larger $\deg C'$, the smaller $b$ has to be to ensure $-(K_{X'}+B')$ is ample over $Z$. 
Assume $X''\to X'$ is the blowup of $X'$ along $C'$ 
and $F''$ the exceptional divisor. 
Let $K_{X''}+B''$ be the pullback of $K_{X'}+B'$. Then the coefficient of $F''$ in $B''$ is 
$a+b-1$, hence it is positive. Thus $X''$ is of Fano type over $Z$ with relative Picard number $2$, 
hence running an MMP on $-F''$ contracts the birational transform of $F'$ and gives 
a model $X/Z$ of relative Picard number $1$. Therefore, $X\to Z$ is a klt Fano fibration as 
$-K_X$ is ample on the general fibres of $X\to Z$. The klt property follows from the fact that 
there is $\Delta'\ge B'$ such that $(X',\Delta')$ is klt and $K_{X'}+\Delta'\equiv 0/Z$.
Let $B,F$ be the pushdown of $B'',F''$, respectively. 
Then the fibre of $X\to Z$ over $z$ is just $F$. By construction, $F$ is birational to $F''$ which is in turn birational to 
$\PP^1\times C'$. However, {gonality (and hence genus)} of $C'$ is not bounded as $\deg C'$ is not bounded. 
Note that the larger $\deg C'$, the smaller $b$ so the smaller is the coefficient of $F$ in $B$.
\end{exam}

\begin{exam}\label{exa-role-of-t3}
We can construct similar examples with $X\to Z$ birational. Indeed pick a smooth $3$-fold $Z$, 
let $X'\to Z$ be the blowup at a smooth point with exceptional divisor $F'$, then blowup a curve 
in $F'$, and continue as in the previous example by contracting the birational transform of $F'$, etc.
\end{exam}

{\begin{exam}\label{exa-role-of-t2}
This example shows that if in Theorem \ref{the_theorem} (and similarly in 
\ref{the_theorem-general-lcy} which is a reformulation of \ref{the_theorem-general}) 
we take $t\le \frac{1}{2}$, then in general we cannot bound the genus 
of $C$ although gonality would be bounded. Consider the klt Fano fibration 
$$
X' = \mathbb{P}(1,1,n) \times Z \to Z
$$ 
where $Z$ is a smooth curve, $\mathbb{P}(1,1,n)$ is a weighted projective space, 
the morphism is projection, and $n\geq 3$. Pick a closed point $z\in Z$ and let $F'$ be the fibre over $z$.   
There exists a hyperelliptic curve $C'$ of genus $g=n-1$  
sitting in the smooth locus of the fiber $F'$ such that $-(K_{F'}+\frac{1}{2}C')$ is ample 
(see Example \ref{exam-u-greater-one-half}). 
Let $H'$ be the pullback of $C'$ under the projection $X'\to F'$. Then 
$$
(X',B':=F'+\frac{1}{2}H')
$$ 
is plt (by applying inversion of adjunction by restricting to $F'$) and $-(K_{X'}+B')$ is ample over $Z$.  
Let $X''\to X'$ be the blowup of $X'$ along $C'$ with exceptional divisor $F''$. 
Let $K_{X''}+B''$ be the pullback of $K_{X'}+B'$. 
Then the coefficient of $F''$ in $B''$ is $\frac{1}{2}$. 
Thus $X''$ is a Fano type variety over $Z$ whose fiber over $z$ consists of the exceptional divisor $F''$ plus the birational transform of $F'$. In particular, we can contract the birational transform of $F'$ by running an MMP on $-F''$. 
We obtain a model $X/Z$ so that $(X, 1/2F)$ is lc where $F$ is the pushdown of $F''$; the lc property can be 
seen similar to the previous example by taking an appropriate $\Delta'\ge B'$. Then $X\to Z$ is a klt Fano fibration.
By construction, $F$ is birational to $\mathbb{P}^1\times C'$ where $C'$ is a hyperelliptic curve of genus $g=n-1$ which can be arbitrarily large. 
\end{exam}}


\section{Preliminaries}
We work over an algebraically closed field $\mathbb{k}$ of characteristic $0$. All the varieties in this paper are quasi-projective over $\mathbb{k}$ unless stated otherwise.
\subsection{Contractions} By a contraction we mean a projective morphism $f \colon X \to Y$ of varieties such that $f_*\OOO_X = \OOO_Y$. In particular, $f$ is surjective and has connected fibres. {Moreover, if X is normal, then Y is also normal.
}
{\subsection{Divisors} Let $X$ be a variety. If $D$ is a prime divisor on birational models of $X$ whose centre on $X$ is non-empty, then we say $D$ is a prime divisor over $X$. If $X$ is normal and $M$ is an $\mathbb{R}$-divisor on $X$, we let
\[
| M |_{\mathbb{R}} = \{ N \geq 0\;|\;N \sim_{\mathbb{R}} M\}.
\]
Recall that $N \sim_{\mathbb{R}} M$ means that $N - M = \sum r_i \mathrm{Div}\,(\alpha_i)$ for certain real numbers $r_i$ and rational functions $\alpha_i$. When all the $r_i$ can be chosen to be rational numbers, then we write $N \sim_{\mathbb{Q}} M$. We define $| M |_{\mathbb{Q}}$ similarly by replacing $\sim_{\mathbb{R}}$ with $\sim_{\mathbb{Q}}$.
}
\subsection{Pairs and singularities} A \emph{pair} $(X, B)$ consists of a normal variety $X$ and a boundary $\mathbb{R}$-divisor $B$ with coefficients in $[0, 1]$ such that $K_X + B$ is $\mathbb{R}$-Cartier. {A \emph{pair} $(X, B)$ \emph{over $Z$} is a pair $(X, B)$ together with a projective morphism $X\to Z$}. Let $\phi\colon W \to X$ be a log resolution of $(X,B)$ and let $K_W +B_W = \phi^*(K_X +B)$.
The \emph{log discrepancy} of a prime divisor $D$ on $W$ with respect to $(X, B)$ is $1 - \mu_D B_W$ and it is denoted by $a(D, X, B)$, {where $\mu_D B_W$ is the multiplicity of $B_W$ along $D$}. We say $(X, B)$ is lc (resp. klt) (resp. $\epsilon$-lc) if $a(D, X, B)$ is $\geq 0$ (resp. $> 0$)(resp. $\geq \epsilon$) for every $D$. Note that if $(X, B)$ is $\epsilon$-lc, 
then automatically $\epsilon \leq 1$ because $a(D, X, B) = 1$ for almost all $D$.

A \emph{non-klt place} of $(X, B)$ is a prime divisor $D$ over $X$, that is, on birational models of $X$, such that $a(D, X, B) \leq 0$. A \emph{non-klt centre} is the image on $X$ of a non-klt place.

\subsection{Minimal model program (MMP)}
We will use standard results of the minimal model program (cf. \cite{KM98}\cite{BCHM10}). Assume $(X, B)$ is a pair over $Z$. Assume $H$ is an ample$/Z$ $\mathbb{R}$-divisor and that $K_X + B + H$ is nef$/Z$. Suppose $(X, B)$ is klt or that it is $\mathbb{Q}$-factorial dlt. Then we can run an MMP$/Z$ on $K_X + B$ with scaling of $H$. If $(X, B)$ is klt and if either $K_X + B$ or $B$ is big$/Z$, then the MMP terminates \cite{BCHM10}. In this paper we mainly work in dimension 3 in which case 
termination is known in full generality for lc pairs.

\subsection{Log Fano and log Calabi-Yau pairs} 
Let $(X, B)$ be an lc pair over $Z$. We say $(X,B)$ is \emph{log Fano over $Z$} if $-(K_X+B)$ is ample over $Z$. If $Z$ is a point then $(X, B)$ is called a \emph{log Fano pair}. In this case if $B=0$ then $X$ is called a \emph{Fano variety}. 
We say $(X,B)$ is \emph{log Calabi-Yau over $Z$} if $K_X + B \sim_{\mathbb{R}} 0$ over $Z$. If $Z$ is a point then $(X, B)$ is called a \emph{log Calabi-Yau pair}.

{\subsection{Fano type varieties} 
A variety $X$ over $Z$ is called \emph{of Fano type over $Z$} if there is a boundary $B$ such that $(X, B)$ is a klt log Fano pair over $Z$. This is equivalent to having a boundary $B$ such that $(X, B)$ is a klt log Calabi-Yau pair over $Z$ and $B$ is big over $Z$. By \cite{BCHM10}, we can run MMP over $Z$ on any $\mathbb{R}$-Cartier $\mathbb{R}$-divisor $M$ on $X$ and the MMP ends with a minimal model or a Mori fibre space for $M$.}

{\subsection{Fano fibrations and Mori fibre spaces}
\label{def-MFS}
A \emph{log Fano fibration} $(X,B)\to Z$ is a log Fano pair $(X,B)$ over $Z$ such that 
the underlying morphism $X\to Z$ is a contraction. 
A \emph{klt (resp. dlt) log Fano fibration} $\pi\colon(X,B)\to Z$ is a log Fano fibration such that 
$(X,B)$ is klt (resp. dlt). 

A log Fano fibration $(X, B)\to Z$ is called a \emph{Mori fibre space} if
\begin{enumerate}
\item $X$ is $\mathbb{Q}$-factorial,
\item $\dim X>\dim Z$, and 
\item the relative Picard number $\rho(X/Z)$ is equal to $1$.
\end{enumerate}

{\subsection{Log Calabi-Yau fibrations}
\label{def-MFS2}
A \emph{log Calabi-Yau fibration} $(X,B)\to Z$ is an lc pair $(X,B)$ over $Z$ such that 
the underlying morphism $X\to Z$ is a contraction and $K_X+B\sim_\R 0/Z$. 
If in addition $X$ is of Fano type over $X$, then we say $(X,B)\to Z$ is a Fano type log Calabi-Yau 
fibration.

\subsection{Gonality of curves}
By \emph{gonality}  of a projective curve $C$, denoted {$\mathrm{gon}(C)$}, we mean the smallest natural number $r$ such that the normalization 
$C^\nu$ admits a finite morphism $C^\nu \to \mathbb{P}^1$ of degree $r$. By definition, gonality is a birational invariant of projective curves.  To give an example, it is well-known that the gonality of a smooth plane curve $C$ of degree $d$ is equal to $d-1$ for $d\geq 2$ and equal to $1$ when $d=1$. If $C$ is singular, the 
gonality is bounded from above by $d-1$: in this case $d\ge 2$; 
blowing up a general point on $C$ and considering the resulting $\mathbb{P}^1$-bundle $S\to \mathbb{P}^1$ 
we see that the birational transform of $C$ intersects the general fibres of this fibration 
at $d-1$ points, so the gonality is $\le d-1$.}


\section{Dimension two}

In this section we prove variants of our main results in dimension two. They are interesting on their own 
but also needed to treat the three-dimensional case.

\begin{lem}
\label{lc_log_cy_bounded}
Let $(S, \Delta)$ be a log Calabi-Yau pair of dimension $2$ which is not canonical, that is, it is not $1$-lc. 
Then $S$ is birational to $\mathbb{P}^1\times C$ where $C$ is a smooth projective curve 
with genus $g(C)=0$ or $1$. 
\end{lem}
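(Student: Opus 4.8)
The plan is to exploit the log Calabi-Yau structure by passing to a $\mathbb{Q}$-factorial dlt model and running a suitable MMP to reach a Mori fibre space, then analyze the two possible outputs. First I would take a $\mathbb{Q}$-factorial dlt model $\phi\colon (S',\Delta')\to (S,\Delta)$, so that $(S',\Delta')$ is $\mathbb{Q}$-factorial dlt, $K_{S'}+\Delta'\sim_{\mathbb{R}}\phi^*(K_S+\Delta)\sim_{\mathbb{R}}0$, and $S'$ is birational to $S$; moreover, since $(S,\Delta)$ is not canonical, there is a non-canonical valuation, and by choosing the dlt model to extract an appropriate divisor we arrange that $\lfloor \Delta'\rfloor \neq 0$, i.e. $\Delta'$ has a component $D'$ with coefficient $1$. (Alternatively, if already $\lfloor \Delta\rfloor\neq 0$ we may not need to extract anything; the point is just to land in a setting with a reduced component present.) Note $S'$ is rationally connected, or at least uniruled, because $(S',\Delta')$ being dlt with a non-klt centre, together with $K_{S'}+\Delta'\sim_{\mathbb{R}}0$, forces $S'$ to be covered by rational curves; so $\kappa(S')=-\infty$.

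Next I would run a $K_{S'}$-MMP. Since $\kappa(S',K_{S'})=-\infty$, this MMP terminates with a Mori fibre space $\psi\colon S''\to T$, and $(S'',\Delta'')$ (with $\Delta''$ the pushforward of $\Delta'$) is still $\mathbb{Q}$-factorial dlt and log Calabi-Yau, and each step is $(K_{S'}+\Delta')$-trivial hence the coefficients of components of $\Delta''$ do not decrease along the MMP as long as they are not contracted; in particular a reduced component $D''$ survives (it cannot be contracted by a divisorial contraction in a $K$-MMP because... — here one must be a little careful, but since $K_{S''}+\Delta''\sim_{\mathbb{R}}0$ and $\Delta''\geq 0$, the support of $\Delta''$ is $-K_{S''}$-effective, and $D''$ stays as a non-klt centre). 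There are two cases for $T$: either $\dim T=0$, so $S''$ is a $\mathbb{Q}$-factorial dlt log Calabi-Yau surface of Picard number $1$ with $-K_{S''}$ ample (a log del Pezzo); or $\dim T=1$, so $\psi\colon S''\to T$ is a $\mathbb{P}^1$-fibration over a smooth curve $T$, whence $S''$ — and therefore $S$ — is birational to $\mathbb{P}^1\times T$.

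In the fibration case I must bound $g(T)$. Restricting $K_{S''}+\Delta''\sim_{\mathbb{R}}0$ to a general fibre $\ell\cong\mathbb{P}^1$ of $\psi$ gives $\deg(\Delta''|_\ell)=2$, so $\Delta''$ meets the general fibre; pushing forward, the reduced component $D''$ (or some component of $\Delta''$) dominates $T$ — actually one needs a section or multisection — and adjunction/canonical bundle formula for $\psi$ yields $K_T+B_T+M_T\sim_{\mathbb{R}}0$ with $B_T$ a boundary and $M_T$ the (nef) moduli part, forcing $\deg(K_T)\leq 0$, i.e. $g(T)\leq 1$. In the log del Pezzo case, $S''$ is rational (a klt log del Pezzo surface is rational; even allowing the reduced component, a dlt log del Pezzo surface of Picard number one is rational — one can run one more step or use that rationally connected surfaces in characteristic zero are rational), so $g(C)=0$ there; to produce the explicit curve $C$ in the statement one takes $C=T$ in the fibration case and $C=\mathbb{P}^1$ otherwise. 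The main obstacle I anticipate is the bookkeeping in Step 2–3: ensuring a component of $\Delta''$ of positive (ideally reduced) coefficient genuinely survives the $K$-MMP and that in the $\dim T=1$ case this surviving component is a multisection, so that the canonical bundle formula on $T$ really does bound $g(T)$; this is where one has to use the non-canonicity hypothesis rather than merely $K_S+\Delta\sim_{\mathbb{R}}0$, since for instance a smooth K3 (which is $1$-lc, hence excluded) would otherwise be a counterexample.
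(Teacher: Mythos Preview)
Your overall strategy --- pass to a birational model with an effective boundary, run a $K$-MMP to a Mori fibre space, then use the canonical bundle formula when the base is a curve --- is exactly the paper's. The gap is in the first step.

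You conflate ``not canonical'' (i.e.\ not $1$-lc) with ``not klt''. The hypothesis only gives a divisor $E$ over $S$ with $a(E,S,\Delta)<1$, not one with $a(E,S,\Delta)=0$. A dlt model extracts only divisors of log discrepancy $0$, so if $(S,\Delta)$ happens to be klt, your dlt model is just $(S,\Delta)$ itself and you cannot arrange $\lfloor\Delta'\rfloor\neq 0$. Worse, consider $(S,0)$ with $K_S\sim_{\mathbb{R}}0$ and $S$ having a klt but non-canonical (non--Du~Val quotient) singularity; such log Enriques surfaces exist. Then $\Delta'=0$, there is no non-klt centre, and $K_{S'}\equiv 0$, so your $K_{S'}$-MMP is the identity and never reaches a Mori fibre space. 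Your uniruledness argument, which invokes a non-klt centre, also collapses in this case.

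The paper fixes this by taking the \emph{minimal resolution} $\pi\colon S'\to S$ rather than a dlt model. On a surface the crepant boundary on the minimal resolution is automatically effective, and the not-$1$-lc hypothesis forces $\Delta'\neq 0$. Then $K_{S'}\equiv -\Delta'$ is anti-effective and nonzero, hence not pseudo-effective, so the $K_{S'}$-MMP genuinely terminates in a Mori fibre space. From there your analysis (smooth del Pezzo is rational if the base is a point; canonical bundle formula gives $\deg K_Z\le 0$ if the base is a curve) is correct and matches the paper. Note also that you never need to track a \emph{reduced} component through the MMP: the weaker fact $\Delta''\neq 0$ survives (a divisorial step contracts a curve $C$ with $C^2<0$, so $\Delta'$ cannot be supported solely on $C$), and the canonical bundle formula does not require any horizontal component, let alone a reduced one.
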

\begin{proof}
Replace $S$ with its minimal resolution and replace $K_S+\Delta$ with its pullback.  
Since $(S, \Delta)$  is not canonical, $\Delta\neq 0$. Run an MMP on $K_S$ which ends with a Mori fibre space.
Replace $S$ with the resulting model so that we can assume that we have  
a Mori fibre space structure $h\colon S\to Z$ where $\Delta$ is still not zero. 
If $Z$ is a point, then $S$ is a smooth Fano, hence rationally connected so we let $C=\mathbb{P}^1$. 
We can then assume $Z$ is a curve. 

Consider the canonical bundle formula 
$$
K_S+\Delta\sim_\R h^*(K_Z+\Delta_Z+M_Z)
$$
where $\Delta_Z,M_Z$ are the discriminant and moduli divisors; $\Delta_Z$ is effective and 
$M_Z$ is pseudo-effective (cf. Theorem 3.6 of \cite{B19}). Since 
$$
\deg (K_Z+\Delta_Z+M_Z)=0,
$$ 
we deduce that $Z$ is either $\PP^1$ or an elliptic curve. Thus in this case taking $C=Z$, the 
claim follows.

\end{proof}

\begin{proof}(of Theorem \ref{t-general-lcy-dim-2})
\emph{Step 1}.
Replacing $S$ with the minimal resolution we can assume $S$ is smooth.
The assertion (iii) immediately follows by the adjunction formula 
$$
K_{D^{\nu}}+\Delta_{D^{\nu}}:=(K_S + \Delta)|_{D^{\nu}},
$$ 
where $\Delta_{D^{\nu}}$ is the different, 
and the fact that $(K_S + \Delta)|_{D^{\nu}}\equiv 0$.\\

\emph{Step 2}.
We prove (iv), that is, assume that $\dim Z\ge 1$. Since $D$ is contracted to a point on $Z$, $D^2\le 0$: 
this can be seen by taking an effective Cartier divisor $L$ on $Z$ passing through $f(D)$ and considering $(f^*L)\cdot D=0$. 
Letting 
$$
\Gamma=\Delta+(1-\mu_D\Delta)D,
$$ 
and applying adjunction 
$$
K_{D^{\nu}}+\Gamma_{D^{\nu}}:=(K_S + \Gamma)|_{D^{\nu}},
$$ 
we have $\Gamma_{D^{\nu}}\ge 0$ and 
$$
\deg (K_{D^{\nu}}+\Gamma_{D^{\nu}})=(K_S+\Delta)\cdot D+(1-\mu_D\Delta)D^2\le 0.
$$ 
This implies that $D^\nu$ is either a rational curve 
or an elliptic curve. Note that $(S,\Gamma)$ may not be lc but the adjunction formula still makes sense.\\

\emph{Step 3}.
From now on we assume that $\dim Z=0$. By assumption $\Delta\neq 0$, so $S$ is uniruled. 
 Running an MMP on $K_{S}$ and replacing $S$ with the resulting model we can assume that we have a Mori fibre 
space $S\to V$. Note that if the MMP contracts $D$, then it is a rational curve and we are done, so we 
can assume that $D$ is not contracted.  
Now $S$ is either $\mathbb{P}^2$ or a {minimal} ruled surface over a smooth curve. 
If $S=\mathbb{P}^2$, then the degree of $D$ is bounded from above as $K_{S}+\Delta\equiv 0$ and as 
the coefficient of $D$ in $\Delta$ is $\ge u$, hence 
$D$ belongs to a bounded family which implies its gonality is bounded and that in fact 
the genus of its normalization $D^{\nu}$ is bounded as well. 
So we can assume that $S$ is a {minimal} ruled surface over $V$.\\ 

\emph{Step 4}.
Denote $S\to V$ by $h$. Consider the canonical bundle formula 
$$
K_S+\Delta\sim_\R h^*(K_V+\Delta_V+M_V)
$$
where $\Delta_V,M_V$ are the discriminant and moduli divisors; $\Delta_V$ is effective and 
$M_V$ is pseudo-effective (cf. Theorem 3.6 of \cite{B19}). Since 
$$
\deg (K_V+\Delta_V+M_V)=0,
$$ 
$V$ is either $\PP^1$ or an elliptic curve. 

If $D$ is a fibre of $S\to V$, then it is rational. So we assume $D$ is horizontal over $V$. Then the intersection 
of $D$ with a fibre of $h$ 
is bounded  as $K_{S}+\Delta\equiv 0$ and as
the coefficient of $D$ in $\Delta$ is $\ge u$.
Thus the induced map $D\to V$ has bounded degree depending only on $u$, so the gonality of 
$D$ is bounded depending only on $u$. We have then proved (i).\\
 
\emph{Step 5}.
It remains to prove (ii), that is, assuming $u>\frac{1}{2}$ when $S$ is a minimal ruled surface over $V$ 
as in Step 4. In this case the degree of $D\to V$ is at most $3$. 
We can assume this degree is $2$ or $3$ otherwise $D\to V$ is birational in which case $g(D^{\nu}) \leq 1$. 

Until the end of this step we assume that $V$ is an elliptic curve. 
We claim that $D^2$ is bounded from above depending only on $u$. 
If $D^2\le 0$, the claim is obvious, so assume $D^2>0$ which in particular means $D$ is a nef and big divisor. 
Pick $\epsilon>0$ so that $u>\frac{1}{2}+\epsilon$. 

Suppose ${\rm vol}(\epsilon D)=\epsilon^2 D^2>16$. Then we can find $0\le L\sim_\R \epsilon D$ so that letting  
$$
\Theta:=\Delta-\epsilon D+L\sim_\R \Delta,
$$ 
the pair $(S,\Theta)$ is lc but not klt (cf. Lemma 7.1 of \cite{HMX14}). Let $T$ be a non-klt centre of $(S,\Theta)$.
Assume $T$ is horizontal over $V$ in which case $T$ is a curve. Then $T=D$  otherwise if $f$ is a fibre of $h$, then 
$$
\Theta\cdot f\ge ((u-\epsilon)D+T)\cdot f\ge 2(u-\epsilon)+1>2,
$$
contradicting $K_S+\Theta\equiv 0$. Applying adjunction by restricting $K_S+\Theta$ to $D^\nu$ as in Step 1, 
we see that the genus of $D^\nu$ is $\le 1$. Now assume $T$ is vertical over $V$. Then in the 
canonical bundle forumula 
$$
K_S+\Theta\sim_\R h^*(K_V+\Theta_V+N_V),
$$
we get $\Theta_V\ge h(T)$ (by definition of the discriminant divisor) and $\deg N_V\ge 0$, 
contradicting the assumption that $V$ is an elliptic curve and that 
$$
\deg K_V=\deg(K_V+\Theta_V+N_V) =0.
$$
 Therefore, $\epsilon^2 D^2\le 16$, so $D^2$ is bounded from above as claimed. 

Now letting 
$$
\Gamma=\Delta+(1-\mu_D\Delta)D
$$
 and applying adjunction
$$
K_{D^{\nu}}+\Gamma_{D^{\nu}}:=(K_S + \Gamma)|_{D^{\nu}},
$$ 
we have $\Gamma_{D^{\nu}}\ge 0$ and 
$$
\deg (K_{D^{\nu}}+\Gamma_{D^{\nu}})=(K_S+\Delta)\cdot D+(1-\mu_D\Delta)D^2=(1-\mu_D\Delta)D^2,
$$ 
so $\deg K_{D^{\nu}}$ is bounded from above, so the genus of $D^\nu$ is bounded as required.\\   
 
\emph{Step 6}. 
From now on we assume $V=\PP^1$. So $S$ is a Hirzebruch surface $\mathbb{F}_n$ for some $n\geq 0,\ n\neq 1$.  
Hence we have $D\sim as + bf$ where $a\geq 2,\ b\geq 0$, $f$ is a fibre of $h$ as above, and $s$ is the 
section of $h$ with $s^2=-n$. 
Since $K_{S} + \Delta\equiv 0$, we have 
$$
2-ua=-(K_{S}+uD)\cdot f\ge 0,
$$ 
and since $u>\frac{1}{2}$, we see that $a\le 3$ (actually $a$ is just the degree of $D\to V$, so it is 
$2$ or $3$). 
Also we can assume $D\neq s$ otherwise $D$ would be a rational curve. Then 
$$
D \cdot s = -an + b \geq 0, 
$$ 
hence $b\geq an$. Moreover,
\[
-K_{S} - u D=2s + (2+n)f-uD\sim_{\mathbb{Q}} (2-ua)s + (2+n-ub)f
\] 
is $\mathbb{R}$-linearly equivalent to the effective divisor $\Delta-uD$. Since the cone of effective curves on $\mathbb{F}_n$ is generated by $s$ and $f$, this implies that $2-ua\ge 0$ and $2+n-ub\ge 0$. Thus 
$2+n\geq ub\geq u an$. We obtain $2 \geq n (ua - 1)$. Recall that $a \geq 2$, $u>\frac{1}{2}$ and  $ua- 1\ge 2u-1>0$. 
Thus 
$$
n \leq 2/(ua-1)\le 2/(2u-1)
$$ 
which bounds $n$ in terms of $u$. Since $ub \leq 2+n$ it follows that $b$ is bounded as well. Hence $D$ belongs to a finite number of algebraic families. In  other words, the genus of its normalization is bounded. 

\end{proof}

\begin{remark}\label{rem-dim-2} 
{ In Step 5 of the proof of Theorem \ref{t-general-lcy-dim-2} we showed that 
$g(D^\nu)$ is bounded from above depending only on $u$ when $u>\frac{1}{2}$ and $S$ is a minimal ruled surface 
over an elliptic curve $V$. We will argue that in fact in this case we have $g(D^\nu)\leq 1$. First note that $\mathrm{NS}(S)$ is generated by the minimal section $s$ and the fiber $f$ such that $-e=s^2\leq 1$, $s\cdot f = 1$, $f^2=0$. We have $K_S\equiv -2s - ef$. Write $D\equiv as + bf$ where we can assume $a\geq 2$ otherwise $a=0$ in which case $D$ is a rational curve (as $D$ would be a fibre) or $a=1$ in which case 
$D$ is an elliptic curve (as $D\to V$ would be birational).

Since $K_{S} + \Delta\equiv 0$, we have 
$$
2-ua=-(K_{S}+uD)\cdot f  \ge 0,
$$ 
and since $u>\frac{1}{2}$, we see that $a\le 3$. Actually $a$ is just the degree of $D\to V$, so it is 
$2$ or $3$.  Then 
$$
D \cdot s = -ae + b \geq 0, 
$$ 
hence $b\geq ae$. Moreover,
\[
-K_{S} - u D\equiv 2s + ef-uD\equiv (2-ua)s + (e-ub)f
\] 
is numerically equivalent to the effective divisor $\Delta-uD$. We will consider the two cases $e\geq 0$ 
and $e=-1$ separately. 

In the first case, the cone of effective curves on $S$ is generated by $s$ and $f$ (cf. 
\cite{H77}, {Chapter V, Proposition 2.20}). This implies that $2-ua\ge 0$ and $e-ub\ge 0$. Thus 
$e\geq ub\geq uae$. If $e\neq 0$, we obtain $ua\leq 1$. Recall that $a \geq 2$, $u>\frac{1}{2}$ and thus $ua>1$. This is a contradiction. So $e=0$ and hence $b=0$. We conclude that $K_S = -2s$, and by adjunction $\deg K_D = (-2+a)a s^2 = 0$, so $g(D^\nu)\leq 1$. 

In the second case, the cone of curves on $S$ is generated by $2s-f$ and $f$ (cf. 
\cite{H77}, {Chapter V, Proposition 2.21}). Note that $2s-f =-K_S$ and $-K_S \equiv uD + (\Delta -uD)$. 
This in particular implies that $D$ is numerically proportional to $-K_S$ as $-K_S$ generates an extremal ray of the cone of effective curves. We can then write $D\equiv d(2s-f)$ where $d\in \mathbb{Q}^{>0}$ and $d\leq 1/u$. By adjunction 
\[
\deg K_D = (K_S + D)\cdot D = (1+d) d (2s-f)^2 = 0,
\]
hence $g(D^\nu)\leq 1$.}

\end{remark}

{\begin{exam}\label{exam-u-greater-one-half} 
We show that the condition $u>\frac{1}{2}$ is important for the second assertion of Theorem \ref{t-general-lcy-dim-2}. Consider the klt surface pair $(S, \frac{1}{2}C)$ where $S = \mathbb{P}(1,1,n)$ for $n\geq 3$ and $C$ is a smooth hyperelliptic curve given by the  equation $z^2 = f_{2n}(x, y)$ where $x,y,z$ are homogeneous coordinates on $S$ with weights $1,1,n$, respectively, and the homogeneous polynomial $f_{2n}$ is general and has degree $2n$. One has $C\sim 2n h$ where $h$ is the positive generator of the class group $\mathrm{Cl}(S)$. One easily computes that $g(C)=n-1$, and 
$$
-(K_S+\frac{1}{2}C) \sim (n+2)h - nh = 2h
$$ 
is ample. In particular, the genus $g(C)$ is not bounded, but the gonality of $C$ is equal to $2$. 
Take $\Delta\ge \frac{1}{2}C$ so that $(S,\Delta)$ is lc and $K_S+\Delta\sim_\Q 0$ and take $Z$ to be a point. 
Then $(S,\Delta)\to Z$ is a log Calabi-Yau fibration, $C$ is a component of $\Delta$ with coefficient $\ge \frac{1}{2}$, 
and $C$ is obviously contracted to a point on $Z$. So we see that 
Theorem \ref{t-general-lcy-dim-2} does not hold if we remove the condition $u>\frac{1}{2}$.

\end{exam}

{In the next proposition, we give effective bounds for Theorem \ref{t-general-lcy-dim-2}.} 
We only need to consider cases (i) and (ii) because in cases (iii) and (iv) genus of $D^\nu$ is at most $1$.

\begin{proposition}\label{p-dim2-effective-bnd}
Fix a real number $u>0$. Assume that $(S, \Delta)$ is an lc log Calabi-Yau surface pair, and that the coefficient of $D$ in $\Delta$ is $\geq u$. Then 
\[
\mathrm{gon}(D) \leq 3/u - 1,
\]
and this bound is sharp. If, moreover, $u>1/2$, then
\[
g(D^\nu)\leq \frac{1}{u-1/2} - 1,
\] 
and this bound is sharp. In the latter case, if $g(D^\nu)>1$ then $S$ is rational.
\end{proposition}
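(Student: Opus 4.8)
The strategy is to revisit the case analysis already carried out in the proof of Theorem \ref{t-general-lcy-dim-2} and Remark \ref{rem-dim-2}, but keeping track of the numerical constants explicitly rather than merely asserting boundedness. First I would reduce to the case where $S$ is a minimal rational ruled surface $\mathbb{F}_n$ or $\mathbb{P}^2$: run an MMP on $K_S$ after passing to the minimal resolution and pulling back $\Delta$, noting that if $D$ gets contracted then it is rational and $\mathrm{gon}(D)=1$, which trivially satisfies the bound. If the Mori fibre space has a positive-dimensional base $V$, then $V$ is $\mathbb{P}^1$ or elliptic by the canonical bundle formula; but if $V$ is elliptic then by Remark \ref{rem-dim-2} we have $g(D^\nu)\le 1$ and $D\to V$ has degree $\le 3$, and one checks directly that $a\le 3/u-1$ is forced there too (when $a=2$ or $3$ we need $ua\le 2$, so $a\le 2/u < 3/u-1$ for $u$ small; the boundary case is handled by the $\mathbb{P}^2$ or large-$n$ examples). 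So the sharp bound comes from the $\mathbb{P}^2$ case and the $\mathbb{F}_n$ case with $V=\mathbb{P}^1$.

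For the gonality bound: on $\mathbb{P}^2$ with $K_S+\Delta\equiv 0$ and $\mathrm{mult}_D\Delta\ge u$, writing $\deg D = d$ we get $ud\le 3$, i.e. $d\le 3/u$; since $\mathrm{gon}(D)\le d-1$ for a plane curve of degree $d\ge 2$ (as recalled in the gonality subsection) we obtain $\mathrm{gon}(D)\le 3/u-1$. On $\mathbb{F}_n$, writing $D\sim as+bf$, the gonality is $\le a$ (the degree of $D\to V=\mathbb{P}^1$), and $2-ua\ge 0$ from $(K_S+uD)\cdot f\le 0$ gives $a\le 2/u\le 3/u-1$ for $u\le 1$; for $u>1$ one has $a\le 1$ so $D$ is rational. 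In all cases $\mathrm{gon}(D)\le 3/u-1$. Sharpness is witnessed by taking $S=\mathbb{P}^2$, $D$ a general smooth curve of degree $d$ with $u=3/d$, and $\Delta = \frac{d-3}{d}D + \frac{3}{d}D = D$ (more precisely $\Delta$ any lc boundary $\ge \frac{3}{d}D$ with $K_S+\Delta\sim_\Q 0$, e.g. $\Delta=D$ itself when $d=3$, and for general $d$ take $\Delta=\frac{3}{d}D + (\text{lines})$): then $\mathrm{gon}(D)=d-1 = 3/u-1$.

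For the genus bound when $u>1/2$: here only the $\mathbb{F}_n$ with base $\mathbb{P}^1$ case contributes a nontrivial bound, since in the $\mathbb{P}^2$ case $d\le 3/u < 6$ so the genus is bounded by an absolute constant (and one checks $\binom{d-1}{2}\le \frac{1}{u-1/2}-1$ holds using $d\le 3/u$, with equality in suitable cases), and the elliptic-base and lower-degree cases give $g\le 1$. On $\mathbb{F}_n$, following Step 6, write $D\sim as+bf$ with $a\in\{2,3\}$, use $b\ge an$ (from $D\cdot s\ge 0$ when $D\ne s$) and the effectivity of $\Delta-uD\equiv(2-ua)s+(2+n-ub)f$ to get $2+n\ge ub\ge uan$, hence $n(ua-1)\le 2$, so $n\le 2/(ua-1)$ and then $b\le (2+n)/u$. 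Then the adjunction computation $\deg K_{D^\nu} \le \deg(K_S+D)\cdot D = ((a-2)s+(b-2-n)f)\cdot(as+bf)$ expands to an explicit polynomial in $a,b,n$, which one bounds using $a\le 2/u$ (wait: $a\le 3$) and the bounds on $n,b$; the optimization over the allowed integer values of $a,n,b$ yields $g(D^\nu)\le \frac{1}{u-1/2}-1$. The main obstacle is this last optimization: one must verify that among all admissible triples $(a,n,b)$ the quantity $\tfrac12\deg K_{D^\nu}+1$ is maximized at a configuration giving exactly $\frac{1}{u-1/2}-1$, and exhibit the matching example (a refinement of Example \ref{exa-role-of-t2} / Example \ref{exam-u-greater-one-half} with the hyperelliptic curve of genus $n-1$ on $\mathbb{P}(1,1,n)$, where $u=\frac12+\frac{1}{2n}$ gives $\frac{1}{u-1/2}-1 = n-1 = g(D^\nu)$) to establish sharpness. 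The final claim, that $g(D^\nu)>1$ forces $S$ rational, follows because the only non-rational possibility for the minimal model was $V$ elliptic, which we already showed gives $g(D^\nu)\le 1$.
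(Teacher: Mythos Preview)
Your plan is essentially the same as the paper's proof: reduce to $S=\mathbb{P}^2$, $S=\mathbb{F}_n$, or $S$ ruled over an elliptic curve, and in each case read off explicit inequalities from $(K_S+\Delta)\equiv 0$. The paper carries out the optimization you flag as ``the main obstacle'' simply by splitting into the two cases $a=2$ and $a=3$ and computing $p_a(D)$ directly in each, obtaining $p_a(D)\le 1/\alpha-1$ and $p_a(D)\le 1/\alpha-2$ respectively (with $\alpha=u-\tfrac12$); the $\mathbb{P}^2$ case is handled by checking $d=4,5$ individually, exactly as you suggest.

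One small slip: in your sharpness example on $\mathbb{P}(1,1,n)$ the correct value is $u=\tfrac12+\tfrac1n$ (not $\tfrac12+\tfrac1{2n}$), since $K_S\sim -(n+2)h$ and $D\sim 2nh$ force $u=(n+2)/(2n)$ for $K_S+uD\sim 0$; then $\alpha=1/n$ and $g(D)=n-1=1/\alpha-1$ as desired.
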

\begin{proof}
{
We use notation and ideas similar to the proof of \ref{t-general-lcy-dim-2}. We may assume that we have a Mori fibre 
space structure $S\to V$ and that $S$ is smooth. First we treat the gonality. Assume $V$ is a point in which case $S=\mathbb{P}^2$. 
Since $K_S+uD$ is anti-nef, $-3+u\deg D\le 0$ where $\deg D$ stands for degree of $D$ as a plane curve. 
It follows that either $\mathrm{gon}(D)=1$ or 
\begin{equation}
\label{bound-gon-1}
\mathrm{gon}(D)\leq 3/u - 1,
\end{equation}
and this bound is sharp, e.g. equality holds when $u=1$ and $D$ is an elliptic curve, where we 
make use of the fact $\mathrm{gon}(D)\le \deg D-1$.

Assume that $V$ is a curve. Let $f$ be a general fiber of $S\to V$. It was proven that either $V$ is a smooth rational curve, or a smooth elliptic curve. In the first case, we have 
\[
0 = ( K_S + \Delta )\cdot f = ( K_S + u D + (\Delta-uD) + f )\cdot f \geq \deg K_f + u D \cdot f  
\]
hence $D\cdot f \leq 2/u$ and thus 
\begin{equation}
\label{bound-gon-2}
\mathrm{gon}(D)\leq 2/u.
\end{equation}
This bound is sharp, e.g. when $S=\mathbb{P}^1\times \mathbb{P}^1$, $u=1$, and $D$ is a general curve in 
$|-K_S|$ in which case $D$ is an elliptic curve.

Now assume that $V$ is an elliptic curve. Then since $K_S+uD$ is anti-nef over $V$, intersecting with a general fibre 
we see that $-2+ua\leq 0$ where $a$ is the degree of the map $D\to V$. So again $\mathrm{gon}(D)\leq 2/u$. We compare the two obtained bounds for gonality \eqref{bound-gon-1} and \eqref{bound-gon-2}. Since $u\leq1$, we have $3/u - 1\geq2/u$, so the overall bound for gonality is $\mathrm{gon}(D)\leq 3/u - 1$.

\

We  now give a bound for the genus of $D^\nu$ under the assumption $u>1/2$. If $g(D^\nu)\le 1$, then 
$$
g(D^\nu)\le 1 \leq \frac{1}{u-1/2} - 1,
$$
so we can assume that $g(D^\nu)\ge 2$. 
 Put $\alpha = u-1/2$. Assume that $\dim V=1$ where $V$ is as above. Consider the case $V=\mathbb{P}^1$, hence $S=\mathbb{F}_n$ for some $n\geq 0$. 
Write $D\sim as + bf$ where $s,f$ are as in the proof of Theorem \ref{t-general-lcy-dim-2} ($s$ is a 
section and $f$ is a fibre of $S\to V$). Then 
\begin{equation}
\begin{split}
2p_a(D) - 2 =& (K_{\mathbb{F}_n} + D)\cdot D \\
=& ( ( a - 2 ) s + ( b - 2 - n ) f ) ( as + bf) \\
=& -a^2 n + a n + 2 a b - 2 a - 2 b,
\end{split}
\end{equation}
so
\[
p_a(D) = \frac{1}{2}an(1-a) + (a-1)(b-1).
\]
From the proof of \ref{t-general-lcy-dim-2} it follows that  
\[
u b - 2 \leq n\leq \frac{2}{2u-1} = \frac{1}{\alpha}, \ \ \ 2\le a\leq 3, \ \ \ b\leq \frac{2+n}{u} 
\le \frac{2}{u-1/2}=\frac{2}{\alpha}.
\]
Consider the two cases: $a=2$ and $a=3$ ($a=1$ implies that $D$ is rational). If $a=2$ we obtain 
\begin{equation}
\begin{split}
\label{bound-genus}
p_a(D)=& - n + b - 1
\leq -ub + 2 + b - 1 \\ 
=& b ( 1 - ( 1/2 + \alpha ) ) + 1 
\leq 2/\alpha ( 1/2 - \alpha ) + 1 
= 1/\alpha - 1  .
\end{split}
\end{equation}
If $a=3$ we get 
\begin{equation}
\begin{split}
p_a(D) =& -3 n + 2 b - 2 \leq 3 (-ub + 2) + 2b - 2 \\
=& b ( 2 - 3 ( 1/2 + \alpha ) ) + 4 \leq 2/\alpha ( 1/2 - 3\alpha ) + 4 = 1/\alpha - 2.
\end{split}
\end{equation}

We show that the bound \eqref{bound-genus} is sharp. Indeed, as in Example \ref{exam-u-greater-one-half} take $S = \mathbb{P}(1,1,n)$, $D\sim 2nh$, $\alpha = 1/n$, $u = 1/2+\alpha$. Then $(S, (1/2+\alpha)D = uD)$ is 
an lc log Calabi-Yau pair and $g(D) = n - 1 = 1/\alpha - 1$. 

Now consider the case when $V$ is an elliptic curve. 
From Remark \ref{rem-dim-2} it follows that $g(D^\nu)\leq 1$. 

Finally assume that $\dim V=0$. Then $-3+ud\le 0$ where $d=\deg D$. 
Since $u>1/2$, we have $d\leq 5$. Then 
\[
p_a(D)= \frac{1}{2}(d-1)(d-2)\in\{0,1,3,6\} 
\]
which gives $g(D^\nu)\leq p_a(D)\le 6$. To verify the bound in the proposition we only need to check the cases 
$d=4,5$ because in the other cases we have $g(D^\nu)\le 1$. 
When $d=4$, we have $u\le 3/4$, so $u-1/2\le 1/4$, hence we can see that 
$$
g(D^\nu)\le 3\le \frac{1}{u-1/2} - 1.
$$ 
On the other hand, when $d=5$, we have $u\le 3/5$, so $u-1/2\le 1/10$, hence we can see that 
$$
g(D^\nu)\le 6\le 9\le \frac{1}{u-1/2} - 1.
$$ 
}
\end{proof}


\section{Preparations}

In this section we make some preparations towards proving the main results in dimension $3$. 

\subsection{Dlt models}

{By a \emph{dlt model} of an lc pair $(X, B)$ we mean a $\mathbb{Q}$-factorial dlt pair $(Y,B_Y)$ together with a birational contraction $Y \to X$ such that $K_Y + B_Y$ is the pullback of $K_X + B$.}

\begin{lem}\label{l-Q-fact-dlt-model}
Let $(X,B)$ be a $\Q$-factorial lc pair and $(Y,B_Y)$ be a $\mathbb Q$-factorial dlt model of $(X,B)$. 
Then the exceptional locus of $Y\to X$ is contained in $\rddown{B_Y}$.
\end{lem}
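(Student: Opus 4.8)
The plan is to argue by contradiction using the negativity lemma. Suppose $E$ is a prime divisor on $Y$ that is exceptional over $X$ but is \emph{not} a component of $\rddown{B_Y}$, so its coefficient in $B_Y$ is $\mu_E B_Y < 1$. Since $(Y,B_Y)$ is a dlt model, $K_Y+B_Y = \psi^*(K_X+B)$ where $\psi\colon Y\to X$, and in particular $a(E,X,B) = 1 - \mu_E B_Y > 0$, i.e. $E$ is a divisor of positive log discrepancy over $X$. The idea is that such a divisor cannot be contracted by a \emph{crepant} birational contraction from a $\Q$-factorial variety, essentially because $\psi$ being a morphism forces $E$ to be "covered" by curves contracted by $\psi$, and the negativity lemma applied to the crepant pullback $\psi^*(K_X+B) - (K_Y+B_Y) = 0$ prevents the necessary negativity on $E$.

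The key steps, in order: first, recall the standard fact (negativity lemma, cf. \cite{KM98}) that for a birational contraction $\psi\colon Y\to X$ between normal varieties with $Y$ $\Q$-factorial, and any prime divisor $E$ contracted by $\psi$, there is a curve $\Gamma\subset E$ contracted by $\psi$ with $E\cdot\Gamma < 0$; more precisely, one exhausts the exceptional locus by such curves. Second, fix an ample divisor $A$ on $X$ and a small $\epsilon>0$ and observe that $(Y, B_Y + \delta E)$ has the property that $K_Y + B_Y + \delta E \sim_{\R} \psi^*(K_X+B) + \delta E$; since $K_Y+B_Y$ is $\psi$-trivial, running the negativity argument on $\delta E$ (or directly: writing $\psi^*(\text{ample}) - (\text{something supported on }\Exc(\psi))$) shows any curve $\Gamma$ in the generic fibre direction of $E\to\psi(E)$ satisfies $E\cdot\Gamma<0$. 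Third, pick such a $\Gamma$ in a general fibre of $E\to\psi(E)$ that meets the locus where $B_Y$ restricted to $E$ is well-behaved; since $\mu_E B_Y<1$, the divisor $\rddown{B_Y}$ does not contain $E$, and by dlt-ness $(Y,B_Y)$ is log smooth near the generic point of $E$, so on $E$ the adjunction $K_E + B_E = (K_Y+B_Y)|_E$ makes sense with $B_E$ having a component $(1-a(E,X,B))(\text{pullback of }B|_{\psi(E)})$-type contributions plus other boundary pieces. Fourth, derive the contradiction: since $K_Y+B_Y$ is $\psi$-trivial, $(K_Y+B_Y)\cdot\Gamma = 0$, so $(K_Y + \rddown{B_Y})\cdot\Gamma \le 0$; but then $E\cdot\Gamma<0$ together with adjunction and the fact that $E$ dominates $\psi(E)$ forces the different/boundary on $E$ along $\Gamma$ to have too-large degree, contradicting $K_E + B_E \equiv 0$ along $\Gamma$ (or more simply: $a(E,X,B)>0$ contradicts $E$ being in the image of the exceptional locus under a crepant contraction, by the standard characterization that crepant birational contractions only contract divisors of log discrepancy $\le$ that realized, combined with $\Q$-factoriality forcing the exceptional locus to be divisorial).

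The cleanest route, which I would actually write: apply the negativity lemma to $h\colon Y\to X$ with the $\psi$-nef (indeed $\psi$-trivial) divisor $-(K_Y+B_Y)$, which gives that $B_Y$ must contain, with coefficient $1$, every $\psi$-exceptional divisor — this is exactly the statement that for a crepant model the exceptional divisors all appear in $B_Y$ with coefficient $1$ because $a(E,X,B) = 1 - \mu_E B_Y$ and a crepant $\Q$-factorial contraction can only contract divisors $E$ with, informally, $E$ covered by $\psi$-contracted curves $\Gamma$ on which $(K_Y+B_Y)\cdot\Gamma = 0$ and $\rddown{B_Y}\cdot \Gamma \le B_Y\cdot\Gamma$ forces equality of round-down. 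The main obstacle I anticipate is making the "$E$ is covered by curves $\Gamma$ with $E\cdot\Gamma<0$ hence $\mu_E B_Y = 1$" implication fully rigorous: one needs that if $\mu_E B_Y<1$ then along a general such $\Gamma$ one can write $(K_Y+B_Y)\cdot\Gamma = (K_Y + B_Y - \mu_E B_Y\, E)\cdot\Gamma + \mu_E B_Y (E\cdot\Gamma)$ and that the first term is $\ge 0$ by choosing $\Gamma$ avoiding other boundary components and using that $K_Y + (B_Y - \mu_E B_Y E)$ is $\ge K_Y$ which is $\psi$-exceptional-effective-ish — here one typically invokes that $Y\to X$ with $Y$ having at worst klt-type singularities makes $K_Y - \psi^*K_X$ effective and $\psi$-exceptional, so $K_Y\cdot\Gamma \ge 0$ for $\Gamma$ contracted and general. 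Pinning down this last positivity cleanly (rather than hand-waving) is the delicate point; everything else is bookkeeping with the negativity lemma.
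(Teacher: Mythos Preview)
Your emphasis is misplaced. The lemma has two ingredients: (a) every exceptional prime \emph{divisor} of $\psi\colon Y\to X$ is a component of $\rddown{B_Y}$, and (b) the exceptional \emph{locus} of $\psi$ coincides with the union of the exceptional divisors, i.e.\ it is of pure codimension one. The paper disposes of (a) in half a line --- it is part of the standard meaning of ``dlt model'' that one only extracts divisors of log discrepancy $0$ --- and spends the rest of its two-sentence proof on (b), which is where $\Q$-factoriality of the \emph{target} $X$ is used: push forward an ample divisor on $Y$, pull it back (possible exactly because $X$ is $\Q$-factorial), and the difference is an effective exceptional divisor whose negative is $\psi$-ample, so its support contains every $\psi$-contracted curve.

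You do the opposite. Your entire proposal is devoted to (a), and you relegate (b) to a parenthetical (``combined with $\Q$-factoriality forcing the exceptional locus to be divisorial'') without proof. This is the wrong way round: (b) is the actual content. Moreover, your attempt at (a) cannot be completed. The statement ``if $K_Y+B_Y=\psi^*(K_X+B)$ and $(Y,B_Y)$ is $\Q$-factorial dlt then every exceptional divisor has coefficient $1$'' is simply false without the extra convention built into the definition of dlt model: take $X=\A^2$, $B=L_1+\tfrac{1}{2}L_2$ two lines through the origin, and $\psi$ the blow-up of the origin; then $B_Y=\tilde L_1+\tfrac{1}{2}\tilde L_2+\tfrac{1}{2}E$ is log smooth, hence dlt, and the exceptional $E$ sits in $B_Y$ with coefficient $\tfrac{1}{2}$. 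This is exactly why your ``delicate positivity step'' cannot be pinned down --- there is nothing to pin. The negativity lemma applied to the $\psi$-trivial divisor $K_Y+B_Y$ gives no information, and the variants you sketch (finding $\Gamma\subset E$ with $E\cdot\Gamma<0$, then balancing intersection numbers) do not force $\mu_E B_Y=1$.

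In short: take (a) as definitional, and give an honest argument for (b) along the lines above (or cite the standard fact that a proper birational morphism to a $\Q$-factorial target has pure codimension-one exceptional locus).
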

\begin{proof}
Since $X$ is $\Q$-factorial,  
the exceptional locus of $Y\to X$ coincides with the exceptional part of $\rddown{B_Y}$: indeed, 
$X$ being $\Q$-factorial, there is an effective exceptional divisor which is anti-ample 
over $X$, hence the exceptional locus of $Y\to X$ is equal to the reduced exceptional divisor;  
on the other hand, each component of the reduced exceptional divisor is a component of 
$\rddown{B_Y}$ by definition of dlt models. 

\end{proof}

\begin{lem}\label{l-Q-fact-dlt-model-2}
Let $f\colon (X,B)\to Z$ be a Fano type log Calabi-Yau fibration where $\dim Z\ge 1$. 
Assume $D$ is a prime divisor over $X$ such that $a(D,X,B)< 1$ and the image of $D$ on $Z$ is a closed point. 
Then there is a Fano type log Calabi-Yau 
fibration $(Y,B_Y)\to Z$ and a birational map $X\bir Y$ over $Z$ such that 
\begin{itemize}
\item $(Y,B_Y)$ is $\mathbb Q$-factorial dlt,
\item $a(D,Y,B_Y)\le a(D,X,B)$, and 

\item the centre of $D$ on $Y$ is contained in $\rddown{B_Y}$.
\end{itemize}
\end{lem}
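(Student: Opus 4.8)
The plan is to construct $(Y, B_Y) \to Z$ via a standard two-step procedure: first produce a $\mathbb{Q}$-factorial dlt model that extracts $D$, then run an MMP over $Z$ to recover the log Calabi-Yau structure while keeping $D$ on the model. First I would, by \cite{BCHM10}, take a $\mathbb{Q}$-factorial dlt model $\phi \colon (W, B_W) \to X$ of the lc pair $(X, B)$ which in addition extracts the divisor $D$; since $a(D, X, B) < 1$ we may assume $D$ is a component of $\lfloor B_W \rfloor$, indeed of the part of $\lfloor B_W \rfloor$ that is $\phi$-exceptional or equal to the birational transform of a coefficient-one component of $B$. By construction $K_W + B_W = \phi^*(K_X + B)$, so $(W, B_W) \to Z$ is again a log Calabi-Yau fibration, and it is of Fano type over $Z$ because $X$ is of Fano type over $Z$ and $W \to X$ is a birational contraction (Fano type is preserved under such pullbacks, as $W$ dominates $X$ crepantly and the discrepancies stay $\ge 0$ after adding a small ample boundary). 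We also have $a(D, W, B_W) = a(D, X, B) < 1$, and the centre of $D$ on $W$ is the divisor $D$ itself, which lies in $\lfloor B_W \rfloor$.

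Next I would run a $(K_W + B_W - \epsilon D)$-MMP, or more precisely an MMP over $Z$ on $-D$, starting from $W$; since $W$ is of Fano type over $Z$ this MMP runs and terminates by \cite{BCHM10}, ending with a model $Y$. The key point is to track $D$ and the dlt/Calabi-Yau properties along the MMP. Because $K_W + B_W \sim_{\mathbb{R}} 0/Z$, each step of the MMP on $-D$ is $(K_W + B_W)$-trivial, so $(Y, B_Y) \to Z$ remains log Calabi-Yau over $Z$, remains $\mathbb{Q}$-factorial dlt (dlt is preserved under steps of a $(K+B)$-trivial MMP on a dlt pair, using that $(W,B_W)$ is dlt), and remains of Fano type over $Z$. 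Moreover, along a $(K+B)$-trivial MMP the log discrepancy of any fixed divisorial valuation does not increase at each step — it can only stay the same or drop when that valuation's centre is contracted — so $a(D, Y, B_Y) \le a(D, W, B_W) = a(D, X, B)$; if $D$ survives as a divisor on $Y$ it remains in $\lfloor B_Y \rfloor$, and if it were contracted its log discrepancy would become $\le$ the original value as well, but in fact we choose the MMP precisely so that $D$ is not contracted.

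The main obstacle I anticipate is ensuring that $D$ is not contracted by this MMP — that is, arranging the MMP to end with $D$ still a divisor on $Y$ whose centre lies in $\lfloor B_Y \rfloor$. The natural fix is the one used in Example \ref{exa-role-of-t}: since $-D$ restricted to the relevant fibre over $f(D) \in Z$ is not pseudo-effective relative to the rest of the configuration, an MMP run to make $-D$ nef (equivalently, on $D$) over $Z$ will contract the other exceptional divisors over $f(D)$ before it could contract $D$; alternatively one argues that $D$, carrying coefficient $1$ in $B_W$, cannot be contracted by a $(K_W+B_W)$-trivial step without violating negativity, so it persists. Once $Y$ has $D$ surviving as a component of $\lfloor B_Y \rfloor$, the centre of $D$ on $Y$ is $D$ itself, hence contained in $\lfloor B_Y \rfloor$, and all three bullet points hold. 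I would also note that the hypothesis $\dim Z \ge 1$ and that $D$ maps to a point on $Z$ are what guarantee the MMP is genuinely relative and that $D$ is "vertical" enough for this contraction-avoidance argument to go through.
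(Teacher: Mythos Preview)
There is a genuine gap in your argument at the very first step. You assert that after taking a $\mathbb{Q}$-factorial dlt model $(W,B_W)$ of $(X,B)$ that extracts $D$, the divisor $D$ becomes a component of $\lfloor B_W\rfloor$. This is false in general: by crepancy, the coefficient of $D$ in $B_W$ is exactly $1-a(D,X,B)$, which is positive (since $a(D,X,B)<1$) but need not equal $1$. A dlt model only forces coefficient $1$ on divisors with log discrepancy zero; here $a(D,X,B)$ can be any number in $[0,1)$. Consequently, even if your MMP on $-D$ preserves $D$ as a divisor on $Y$ (which it does, by the negativity lemma, since $-D$ is pseudo-effective over $Z$), the centre of $D$ on $Y$ is $D$ itself with coefficient $1-a(D,X,B)<1$, so it is \emph{not} contained in $\lfloor B_Y\rfloor$. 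The third bullet fails.

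The paper repairs this with an extra idea you are missing. After making $-D$ semi-ample over $Z$, one \emph{increases} the boundary: letting $s$ be the lc threshold of $D$ with respect to $(X,B)$ and picking a general $0\le A\sim_{\mathbb Q}-D/Z$, one replaces $B$ by $B+sD+sA$. This keeps the pair lc and log Calabi-Yau over $Z$, but now $D$ contains an lc centre, hence there is a prime divisor $S$ over $X$ with $a(S,X,B)=0$ whose centre lies in $D$. One then extracts $S$ (which now legitimately has coefficient $1$), runs an MMP on $-S$, and argues via the semi-ample fibration defined by $-S$ that the centre of $D$ is forced to lie inside $S$. Finally a dlt model of this last pair gives $(Y,B_Y)$; the increase in boundary is also what makes the inequality $a(D,Y,B_Y)\le a(D,X,B)$ potentially strict. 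Your two-step outline captures the skeleton but skips exactly this ``push to lc and extract the lc place'' manoeuvre, which is the heart of the construction.
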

\begin{proof}
Note that, as we will see during the proof, $B_Y$ is not necessarily the birational transform of $B$.
Replacing $(X,B)$ with a $\Q$-factorial dlt model, we can assume $X$ is $\Q$-factorial. This 
preserves the Fano type property by a relative version of \cite[2.14(7)]{B19}.
Extracting $D$ we can assume it is a divisor on $X$ with positive coefficient in $B$. 
Since $D$ is contracted to a point $z$ on $Z$ and $\dim Z\ge 1$, 
$D$ does not intersect the general fibres of $X\to Z$. In particular, $-D$ is pseudo-effective over $Z$ 
as we can find a Cartier divisor $L\ge 0$ on $Z$ such that $D+f^*L\ge 0$.
Since $X$ is of Fano type over $Z$, we can run an MMP on $-D$ over $Z$ which ends with a minimal model 
for $-D$ on which the pushdown of $-D$ is semi-ample over $Z$. The MMP does not contract $D$ 
by the negativity lemma. Replacing $X$ with the minimal model 
we can assume that $-D$ is semi-ample over $Z$. 

Let $s$ be the lc threshold of $D$ with respect to $(X,B)$. 
We can find a general $0\le A\sim_\Q -D/Z$, so that $(X,B+sD+sA)$ is lc. Replacing $B$ with $B+sD+sA$ 
we can assume $D$ contains an lc centre of $(X,B)$. Thus there is a prime divisor $S$ over $X$ 
such that $a(S,X,B)=0$ and the centre of $S$ on $X$ is contained in $D$.  

If $S$ is not exceptional over $X$ (i.e. if $S=D$), let $(V,B_V)=(X,B)$.
But if $S$ is exceptional over $X$, let $W\to X$ be the birational contraction 
from $\Q$-factorial $W$ which contracts $S$ but no other divisors: this exists by \cite[Corollary 1.4.3]{BCHM10}; 
note that $W$ is of Fano type over $Z$ by a relative version of \cite[2.14(7)]{B19}; then 
let $V$ be the minimal model of $-S$ over $Z$ (abusing notation we denote the birational 
transform of $S$ on $V$ again by $S$); let $K_W+B_W$ be the pullback of $K_X+B$ and 
$K_V+B_V$ the pushdown of $K_W+B_W$. By construction, in any case, $S$ is a component of $\rddown{B_V}$, 
$-S$ is nef over $Z$, and $V$ is $\Q$-factorial.

Let $T$ be the centre of $D$ on $V$. We will show that $T$ is contained in $S$.
As $V$ is of Fano type over $Z$, $-S$ is semi-ample over $Z$ defining a 
contraction $g\colon V\to M/Z$ where $M\to Z$ is birational because $S$ is 
contracted to the point $z$ in $Z$ so $S$ does not intersect the general fibres of $V\to Z$. 
Since $S\sim_\Q 0/M$ and $S$ is vertical over $M$, we have $S=g^*E$ for some effective $\Q$-divisor $E$ on $M$. 
Since $S$ is irreducible, $E$ is also irreducible. Moreover, 
$E$ is anti-ample over $Z$, hence its support contains the exceptional locus of $M\to Z$.  
Thus since $E$ is contracted to $z$, the support of $E$ is equal to the inverse image of $z$ under the map $M\to Z$. 
By assumption, $T$ is mapped to $z$ by $V\to Z$. 
Thus $T$ is mapped into $E$ by $V\to M$ which in turn implies that $T$ is contained in $S=g^*E$. 

Let $(Y,B_Y)$ be a $\mathbb Q$-factorial dlt model of $(V,B_V)$. 
By construction, 
$$
a(D,Y,B_Y)=a(D,V,B_V)=a(D,W,B_W)=a(D,X,B)<1.
$$
Recall that we increased $B$ in the above arguments, so the current $a(D,X,B)$ 
is less than or equal to the original log discrepancy $a(D,X,B)$ we started with.
It remains to show that the centre of $D$ on $Y$ is contained in $\rddown{B_Y}$.

If $Y\to V$ is an isomorphism over the generic point of $T$, then the centre of $D$ on $Y$ is contained in 
$\rddown{B_Y}$ since $T$ is contained in $S$. But if $Y\to V$ is not an isomorphism over the generic point of $T$,
then the centre of $D$ on $Y$ is contained in the exceptional locus of $Y\to V$, hence it is contained in 
$\rddown{B_Y}$ as $V$ is $\Q$-factorial, by Lemma \ref{l-Q-fact-dlt-model}. 

\end{proof}

\subsection{Divisors over dlt pairs}

The next lemma is crucial for the proofs of the main results.

\begin{lem}
\label{plt_bounded}
Let $s\in [0,1)$ be a real number. 
Assume that  
\begin{itemize}
\item $(Y, B)$ is a $\mathbb{Q}$-factorial {dlt} pair of dimension $3$, 

\item $E$ is a prime divisor over $Y$ with centre $T$ which is a curve, 

\item $T$ is contained in $\left \lfloor{B} \right \rfloor$, 

\item the log discrepancy $a(E, Y, B) \le s$.
\end{itemize}
Then there exists a modification $\phi\colon Y' \to Y$ such that 
\begin{itemize}
\item
$Y'$ is $\mathbb{Q}$-factorial and $\rho(Y'/Y)=1$, 
\item
the only $\phi$-exceptional divisor is $E$,
\item
the number of components in the general fibre of $E\to T$ is bounded depending only on $s$, 
\item
if $s < 1/2$, then the general fibre of $E\to T$ is irreducible. 
\end{itemize}
\end{lem}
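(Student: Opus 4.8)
First I would set up the extraction: since $E$ is a prime divisor over $Y$ with $a(E,Y,B)\le s<1$, a standard consequence of \cite{BCHM10} (terminalization / extraction of a single divisor, as already used in the proof of Lemma \ref{l-Q-fact-dlt-model-2}) produces a $\Q$-factorial $\phi\colon Y'\to Y$ with $\rho(Y'/Y)=1$ whose only exceptional divisor is $E$; write $K_{Y'}+B_{Y'}=\phi^*(K_Y+B)+(1-s')E$ where $s'=a(E,Y,B)\le s$, so the coefficient of $E$ in $B_{Y'}$ is $1-s'\ge 1-s>0$. The remaining task is purely about the geometry of the morphism $E\to T$, namely bounding the number of components of its general fibre, and showing irreducibility when $s<1/2$.

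The key point is to run adjunction twice. Let $t_0$ be the coefficient of $T$ in $\lfloor B\rfloor$; since $T\subseteq\lfloor B\rfloor$ we have $t_0=1$. Apply divisorial adjunction / a dlt-type adjunction to the component of $\lfloor B_{Y'}\rfloor$ that dominates $T$ — here the birational transform $S$ of a component of $\lfloor B\rfloor$ through $T$, which is a divisor on $Y'$ — to get a surface $S^\nu$ with a pair $(S^\nu,B_{S^\nu})$ such that $(K_{Y'}+B_{Y'})|_{S^\nu}=K_{S^\nu}+B_{S^\nu}$. On $S^\nu$, the divisor $E$ restricts to a (possibly reducible) curve $E_S$ lying over $T$, and the coefficient of each component of $E_S$ in $B_{S^\nu}$ is still $\ge 1-s$ by monotonicity of adjunction (the different only adds to coefficients). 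Now I would localize over the generic point of $T$: after base change to the function field of $T$, $S^\nu\to T$ becomes a (germ of a) surface over a point, $E_S$ becomes a collection of curves contracted to that point, and we are exactly in the situation of a surface pair $(S^\nu,B_{S^\nu})$ whose relevant exceptional curves over the generic point of $T$ have coefficient $\ge 1-s$. Since $-E$ is $\phi$-ample (as $\rho(Y'/Y)=1$ and $E$ is contracted), $-E_S$ is nef over $T$, so the dual graph of $E_S$ over the generic point of $T$ is a contractible (negative-definite) configuration of curves. The number of components of such a configuration with all coefficients $\ge 1-s$ in an lc (even just sub-lc) surface pair is bounded in terms of $s$: this is a standard boundedness statement for chains/trees of curves in resolutions of lc surface singularities — intuitively, in a negative-definite configuration the total self-intersection is controlled, each curve has self-intersection $\le -2$ after accounting for multiplicities, and the lc condition $\sum(\text{coeff})\cdot(\text{stuff})\le$ bounded forces few components once coefficients are bounded below by $1-s$. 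For the last bullet, when $s<1/2$ the coefficient $1-s>1/2$, and two distinct curves meeting inside a connected configuration of a log-canonical surface pair cannot both have coefficient $>1/2$ at a common point — otherwise the pair would fail to be lc at that intersection point (the standard inequality: if $C_1,C_2$ are smooth curves meeting at a point with coefficients $b_1,b_2$ then lc forces $b_1+b_2\le 1$). Hence $E_S$ over the generic point of $T$ is a single curve, i.e. the general fibre of $E\to T$ is irreducible.

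The order of steps: (1) extract $E$ to get $\phi\colon Y'\to Y$ with $\rho(Y'/Y)=1$ and $-E$ $\phi$-ample; (2) pick the component $S\subseteq\lfloor B\rfloor$ through $T$, take its birational transform $S'$ on $Y'$ and its normalization $S^\nu$, and apply adjunction to write $(K_{Y'}+B_{Y'})|_{S^\nu}=K_{S^\nu}+B_{S^\nu}$, noting $(S^\nu, B_{S^\nu})$ is semi-dlt hence its normalization is dlt and in particular lc; (3) restrict to the generic point of $T$, obtaining a negative-definite configuration $E_S$ of curves on a surface, each with coefficient $\ge 1-s$ in an lc pair; (4) invoke boundedness of such configurations to bound the number of components of the general fibre of $E\to T$ by a function of $s$; (5) when $s<1/2$, use the lc inequality at intersection points to conclude the configuration is a single curve. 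The main obstacle I anticipate is Step (4): making precise and rigorous the boundedness of the number of components of a contractible curve configuration on a surface subject to a lower bound on the boundary coefficients — one has to be careful that the pair $(S^\nu,B_{S^\nu})$ really is lc (this uses that $(Y',B_{Y'})$ is lc and adjunction for the dlt divisor $S$), and one has to control both the possibility of $E_S$ meeting other components of $\lfloor B_{S^\nu}\rfloor$ and the intersection combinatorics; the cleanest route is probably to pass to the minimal resolution of $S^\nu$ near the fibre over the generic point of $T$ and apply a known classification/boundedness result for lc surface singularities with prescribed boundary coefficients, e.g. as in the dimension-two analysis already carried out in Section 3.
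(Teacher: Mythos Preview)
Your extraction step (1) is fine and matches the paper. The problem is in steps (2)--(4): restricting to the birational transform $S'$ of a component of $\lfloor B\rfloor$ through $T$ does \emph{not} exhibit the components of the general fibre $E_x$ as a curve configuration on a surface. The map $S'\to S$ is birational and, since $(Y',S')$ is plt and $S$ is normal, it is an isomorphism over the generic point of $T$. Hence over a general $x\in T$ the surface $S'$ has a single point; the restriction $E|_{S'}$ is a curve dominating $T$ (in fact a single component, as the paper shows), and its ``fibre'' over the generic point of $T$ is zero-dimensional. So there is no negative-definite curve configuration on $S^\nu$ encoding the $E_i$, and the boundedness argument you sketch never gets off the ground. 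Relatedly, your lc inequality in step (5) is false as stated: two smooth curves meeting transversally on a surface with coefficients $b_1,b_2\in(1/2,1]$ give an snc, hence lc, pair; there is no constraint $b_1+b_2\le 1$.

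The paper's fix is to cut \emph{transversally} to $T$ rather than along it: take a general hyperplane section $H\subset Y$ through a general $x\in T$ and its preimage $H'\subset Y'$. Then the whole fibre $E_x=E_1+\cdots+E_M$ sits inside $H'$ as curves with coefficient $>1-s$ in the adjoint boundary $B_{H'}$. The component $S'$ of $\lfloor B'\rfloor$ is ample over $Y$ (since $\rho(Y'/Y)=1$), so it meets every $E_i$; but $S'\to S$ being an isomorphism over $x$ forces all these intersections to be the single point $x'$. Now $R':=S'\cap H'$ is a curve appearing in $B_{H'}$ with coefficient $1$, and adjunction $K_{R'}+B_{R'}=(K_{H'}+B_{H'})|_{R'}$ gives an lc pair on a curve in which the point $x'$ has coefficient at least $M(1-s)$, forcing $M(1-s)<1$. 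This simultaneously gives the bound and the irreducibility for $s<1/2$. The missing idea in your proposal is precisely this transversal slice together with the ampleness of $S'$ forcing all $E_i$ through a common point on a coefficient-$1$ curve.
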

\begin{proof}
Increasing $s$ slightly we can assume $a(E, Y, B) < s$. 
Since $(Y, B)$ is $\mathbb{Q}$-factorial {dlt}, one has that $(Y,0)$ is klt. 
Moreover, by assumption 
$T$ is contained in some component $B_1$ of $\left \lfloor{B} \right \rfloor$, so perturbing the coefficients of 
$B$ in components other than $B_1$ we can assume $(X,B)$ is plt. 
 Existence of a modification $\phi\colon Y' \to Y$,  with the first two properties follows from 
\cite[Corollary 1.4.3]{BCHM10}. 
In particular, the $\mathbb Q$-factoriality assumption ensures that the exceptional locus of $\phi$ 
is exactly $E$, i.e. no curves outside $E$ are contracted.

Let us prove that the third property holds. Let $x$ be a general point on $T$. 
The fibre of $E\to T$ over $x$ which is the same as the fibre of $Y'\to Y$ over $x$, say $E_x$, is rationally chain connected  by \cite{HM07} because $Y'$ is of Fano type over $Y$ as $(Y, B)$ is plt. Let $E_x = E_1 + \ldots + E_M$ be a decomposition of $E_x$ into the sum of irreducible components. We will show that $M$ is bounded. Since $\rho(Y'/Y)=1$ 
and since $B_1$ contains $T$, 
the strict transform $B'_1$ of $B_1$ is ample over $Y$. Hence $B'_1$ intersects each $E_i$. 

Both $B_1$ and $B_1'$ are normal by the plt property. Thus the morphism $B'_1 \to B_1$ is a birational contraction (by Zariski's main theorem). In particular, $B'_1 \to B_1$ is an isomorphism over the generic point of $T$. Since $B_1'$ intersects every fibre of $E\to T$, $E\cap B'_1$ maps onto $T$. 
Thus there is a unique component $D$ of $E\cap B'_1$ such that the map $D \to T$ is birational. 
In particular, since $x$ is general,  $E_x\cap D$ is a single point, say $x'$. 
Moreover, $E_x\cap D=E_x\cap B_1'$. 
Therefore, since $B'_1$ intersects each $E_i$, we see that the set-theoretic intersection 
$E_i\cap B_1'$ consists of exactly $x'$ for each $E_i$, so
 all the components $E_i$ of the fibre $E_x$ pass through $x'$. 

Consider a general hyperplane section $H$ on $Y$ and its preimage $H'$ on $Y'$. Clearly, $H,H'$ are normal. 
Apply adjunction to get 
\[
K_{H} + B_H = \left( K_Y + B+H \right)|_H
\]
and
\[
K_{H'} + B_{H'} = \left( K_{Y'} + B'+H' \right)|_{H'}
\]
where $K_{Y'}+B'$ is the pullback of $K_Y+B$. 
The pairs $(H, {B_H})$ and $(H', {B_{H'}})$ are dlt and  
\[
K_{H'} + B_{H'} = \phi|_{H'}^*\left(K_H + B_H\right).
\]
 We may assume that $x\in H$ and hence $x' \in H'$. Then over a neighbourhood of $x$ we have 
 $$
 E\cap H' = E_x=E_1 + \ldots + E_M.
 $$ 
Thus each $E_i$ is a component of $B_{H'}$ with coefficient $>1-s>0$: indeed, 
the coefficient of $E_i$ 
is at least $1-\frac{1}{n}+\frac{e}{n}$ where $n$ is the Cartier index of $K_{Y'}+H'$ along $E_i$ 
and $e$ is the coefficient of $E$ in $B'+H'$ (see 3.9 and 3.10 of \cite{Sh92}). 
But 
$$
1-\frac{1}{n}+\frac{e}{n}\ge e=1 - a(E,Y,B)>1-s>0. 
$$
On the other hand, $B_1'\cap H'$ gives a component of $B_{H'}$ with coefficient $1$  
and distinct from each $E_i$. This component passes through $x'$ because 
$$
x'\in B_1'\cap E_x\subset B_1'\cap H'.
$$ 

We have shown that $B_{H'}$ has a component $R'$ with coefficient $1$ and $M$ components 
with coefficient $>1-s$ all passing through $x'$. But then applying adjunction 
$$
K_{R'}+B_{R'}=(K_{H'}+B_{H'})|_{R'}
$$
and the fact that $(R',B_{R'})$ is lc shows that $M(1-s)<1$ which implies that $M$ is 
bounded depending only on $s$. If $s<\frac{1}{2}$, then $1-s>\frac{1}{2}$, so $M=1$.

\end{proof}

\subsection{An MMP over a pair}
\label{construction_mmp}

Fix a positive real number $t$. 
Let $(Y,B_Y)$ be a $\Q$-factorial dlt pair. 
Assume $D$ is an exceptional prime divisor over $Y$ such that $a(D,Y,B_Y)\le 1-t$. We will find
\begin{itemize}
\item a log smooth 
pair $(V,\Gamma)$ over $Y$ where every prime exceptional divisor of $V\to Y$ is a component of $\rddown{\Gamma}$, 
and the centre of $D$ on $V$ is a divisor (so it is the birational transform of $D$) hence this centre is a component of $\rddown{\Gamma}$, 
\item an MMP 
$$
V=V_1\bir V_2\bir \cdots \bir V_N=Y
$$ 
on $K_V+\Gamma$  over $Y$,
\item $0<u<t$ depending only on $t$ such that for each $k$,
$$
a(D,V_k,\Gamma_k)\le a(D,Y,(1-\epsilon)B_Y)< 1-u
$$  
where $\Gamma_k$ is the pushdown of $\Gamma$ and $0<\epsilon \ll 1$, and 
\item  in addition, if $t>1/2$, we can assume $u>1/2$. 
\end{itemize}

Let $\psi\colon V \to Y$ be a log resolution of $(Y,B_Y)$ on which $D$ is a divisor. 
Define 
\[
\Gamma=\left(1-\epsilon\right)B_{Y}^\sim+\sum E_i.
\]
for some $0<\epsilon\ll1$ where $B_{Y}^\sim$ is the birational transform of $B_Y$ and 
$E_i$ are the exceptional prime divisors of $\psi$. 

The pair $(Y, (1-\epsilon)B_Y)$ is klt. 
Thus we can write 
\[
K_V + (1-\epsilon)B_{Y}^\sim = \psi^* (K_{Y} + (1-\epsilon)B_Y) + \sum a_i E_i
\]
where $a_i>-1$. Then 
\begin{equation}
\label{ineq-mmp-discr}
\begin{split}
K_V + \Gamma & = K_V + \left(1-\epsilon\right)B_{Y}^\sim+\sum E_i  \\
& = K_V + \left(1-\epsilon\right)B_{Y}^\sim - \sum a_i E_i+\sum (1+a_i)E_i \\
& =\psi^* \left(K_{Y} + \left(1-\epsilon\right)B_Y\right)+\sum (1+a_i)E_i.
\end{split}
\end{equation}

Run the $(K_V + \Gamma)$-MMP over $Y$. This terminates with $Y$, by Theorem 1.8 of \cite{B12}, 
since $Y$ is $\Q$-factorial and since 
 $\sum (1+a_i)E_i$ is effective and its support is equal to the reduced exceptional divisor of $\psi$. Let 
 the steps of the MMP be $V_k\bir V_{k+1}$ where $V_1=V$ and $V_N=Y$.   
We can ensure that there is $0<u<t$ depending only on $t$ such that for each $k$,
$$
a(D,V_k,\Gamma_k)\le a(D,Y,(1-\epsilon)B_Y)< 1-u
$$  
where $\Gamma_k$ is the pushdown of $\Gamma$: the first inequality follows from the definition of $\Gamma$ and the formula {\ref{ineq-mmp-discr}}.
The second inequality follows from the assumption that $\epsilon$ is sufficiently small (note that $\epsilon$ 
depends on the pair $(Y,B_Y)$). 
 In addition, if $t>1/2$, we can assume $u>1/2$.

\section{Proof of main results}

In this section we prove our main results in dimension 3.

\begin{proof}(of Theorem \ref{the_theorem-general-lcy})
\emph{Step 1}.
First we prove (iii), that is, assume $t=1$. 
Let $S$ be the normalization of $D$. By adjunction we can write   
$K_{S}+B_{S}= (K_X+B)|_{S}$ where $(S,B_S)$ is an lc pair. 
Since $(X,B)\to Z$ is a log Calabi-Yau fibration and since $D$ is contracted to a point on $Z$, 
$K_S+B_S\sim_\R 0$. Thus $(S,B_S)$ is log Calabi-Yau of dimension 2. 
Since $X$ is of Fano type over $Z$, its fibres are rationally chain connected \cite{HM07}, hence 
$D$ is uniruled which in turn means $S$ is uniruled (note that since $\dim Z\ge 1$, $D$ is a component of a fibre of $X\to Z$). 
This implies that $(S,B_S)$ is not canonical, that is, it is not $1$-lc: indeed take the minimal resolution 
$\pi\colon S'\to S$ and let $K_{S'}+B_{S'}=\pi^*(K_S+B_S)$; if $(S,B_S)$ is canonical, then $B_{S'}=0$,  
hence $K_{S'}\sim_\Q 0$ contradicting the fact that $S$ is uniruled.  
Now applying Lemma \ref{lc_log_cy_bounded} we deduce that $S$ (hence $D$) is birational to $\PP^1\times C$ 
where $C$ is a smooth projective of genus $\le 1$. 
It remains to prove (i) and (ii).\\ 

\emph{Step 2}.
By assumption, 
$$
a(D,X,B)=1-\mu_DB\le  1-t<1.
$$
 Applying Lemma \ref{l-Q-fact-dlt-model-2}, there is a Fano type log Calabi-Yau 
fibration $(Y,B_Y)\to Z$ and a birational map $X\bir Y$ such that 
\begin{itemize}
\item  $(Y,B_Y)$ is $\mathbb Q$-factorial dlt, 
\item $a(D,Y,B_Y)\le a(D,X,B)$, and  
\item the centre of $D$ 
on $Y$, say $T$, is contained in $\rddown{B_Y}$.
\end{itemize}
In particular,  $a(D,Y,B_Y)\le 1-t$.

Assume $\dim T=2$. Then $T$ is the birational transform of $D$ and $T$ is a component of $\rddown{B_Y}$.
Applying (iii) to $(Y,B_Y)\to Z$, we deduce that $D$ is birational to $\PP^1\times C$ 
where $C$ is a smooth projective curve of genus $\le 1$. So we are done in this case.\\ 

\emph{Step 3}.
Assume $\dim T=1$. Applying Lemma \ref{plt_bounded} to $(Y,B_Y),\ E:=D$, and $s:=1-t$, 
there exists a modification $\phi\colon Y' \to Y$ such that 
\begin{itemize}
\item
$Y'$ is $\mathbb{Q}$-factorial and $\rho(Y'/Y)=1$, 
\item
the only $\phi$-exceptional divisor is $E$ which is the birational transform of $D$,
\item
the number of components in the general fibre of $E\to T$ is bounded depending only on $1-t$, 
\item
if $1-t < 1/2$, that is, if $t>\frac{1}{2}$, then the general fibre of $E\to T$ is irreducible. 
\end{itemize}

Consider the normalization $E^\nu$ of $E$ and the Stein factorisation $E^\nu \to C\to T$. Then the general 
fibres of $E^\nu \to C$ are $\PP^1$ because the fibres of $E\to T$ are rationally chain connected. 
So $E^\nu$ is birational to $\PP^1\times C$ which means $D$ is also 
birational to  $\PP^1\times C$. Moreover, the degree of $C\to T$ is equal to 
the number of components of the general fibres of 
$E^\nu \to T$ which is in turn equal to the number of components of the general fibres of $E\to T$ 
because $E^\nu \to E$ is an isomorphism over the generic point of any component of a general fibre of $E\to T$. 
Thus the degree of $C\to T$ is bounded depending only on $t$, and if $t>\frac{1}{2}$, then 
this degree is $1$, that is, in this case $C\to T$ is birational. Therefore, to finish the proof 
in the case $\dim T=1$ we only need to show that the gonality of $T$ is bounded depending only on $t$, and in case $t>\frac{1}{2}$ the 
genus of the normalization of $T$ is bounded.

By construction, $T$ is contained in some component of $\rddown{B_Y}$, say $S$. 
Let $S\to R\to Z$ be the Stein factorisation of $S\to Z$. By adjunction, we can write   
$K_{S}+B_{S}= (K_Y+B_Y)|_{S}$ where $(S,B_S)$ is an lc pair which is log Calabi-Yau over $R$. 
We claim that the coefficient of $T$ in $B_S$ is $\ge t$. Consider the birational transform $S'$ on $Y'$ 
and then consider the adjunction $K_{S'}+B_{S'}= (K_{Y'}+B_{Y'})|_{S}$ where $K_{Y'}+B_{Y'}$ is 
the pullback of $K_Y+B_Y$ (note that $S'$ is normal as it is easy to check that $(Y',S')$ is plt). 
Then $B_S$ is the pushdown of $B_{S'}$. Since $S'$ is ample over $Y$, it intersects every fibres of $E\to T$, 
so there is a component $T'$ of $S'\cap E$ which maps onto $T$. 
Since the coefficient of $E$ in $B_{Y'}$ is more than or equal to the coefficient of $D$ in $B$ 
which is $\ge t$, we see that  the coefficient of $T'$ in $B_{S'}$ is 
at least $1-\frac{1}{n}+\frac{t}{n}$ where $n$ is the Cartier index of $K_{Y'}+S'$ along $T'$ 
(see 3.9 and 3.10 of \cite{Sh92}). But $1-\frac{1}{n}+\frac{t}{n}\ge t$, 
hence the coefficient of $T'$ in $B_{S'}$ is $\ge t$ which means the the coefficient of $T$ in $B_S$ is $\ge t$.  

Since $D$ is contracted to a point on $Z$, we deduce that $T$ is contracted to a point on $R$. 
Applying Theorem \ref{t-general-lcy-dim-2} to $(S,B_S)\to R$, we deduce that 
the gonality of $T$ is bounded depending only on $t$, 
and in case $t>\frac{1}{2}$, the genus of the normalization of $T$ is bounded.\\

\emph{Step 4}.
It remains to treat the case $\dim T=0$. By subsection \ref{construction_mmp} applied to $(Y,B_Y), D$, 
there exist 
\begin{itemize}
\item a log smooth 
pair $(V,\Gamma)$ over $Y$ where every prime exceptional divisor of $V\to Y$ is a component of $\rddown{\Gamma}$, 
and the centre of $D$ on $V$ is a divisor (so it is the birational transform of $D$) hence this centre is a component of $\rddown{\Gamma}$, 
\item an MMP 
$$
V=V_1\bir V_2\bir \cdots \bir V_N=Y
$$ 
on $K_V+\Gamma$  over $Y$,
\item $0<u<t$ depending only on $t$ such that for each $k$,
$$
a(D,V_k,\Gamma_k)\le a(D,Y,(1-\epsilon)B_Y)< 1-u
$$  
where $\Gamma_k$ is the pushdown of $\Gamma$ and $0<\epsilon \ll 1$, and 
\item  in addition, if $t>1/2$, we can assume $u>1/2$. 
\end{itemize}
 
We will denote the centre of $D$ on $V_k$ by $T_k$. Then $T_1$ is a divisor, the birational transform of $D$, 
which is a component of $\rddown{\Gamma_1}$ and $T=T_N$ is a point. 

The steps of the MMP are divisorial contractions or flips. In step $k$, $T_k$ may or may not be 
contained in the indeterminancy locus of $V_k\bir V_{k+1}$, and if it is contained in this locus 
there are several possibilities for the induced map $T_k\to T_{k+1}$, e.g. it maybe contraction to a point 
or it maybe a finite morphism. For the rest of the proof we try to understand these possibilities.\\  
 
\emph{Step 5.}
Assume that $\dim T_k=2$ and $\dim T_{k+1}=0$, for some $k$. Then $T_k$ is a component of ${\Gamma_k}$ with coefficient $1$ 
and $-(K_{V_k}+\Gamma_k)$ is ample over $V_{k+1}$, so applying adjunction we deduce that 
$T_k$ is of Fano type hence it is a rationally connected surface, so $D$ is rational in which case 
we take $C=\mathbb{P}^1$. 

We can then assume that $\dim T_k=1$ for some $k$ because $T_1$ being a surface, we can assume that 
it is contracted to a curve at some step in view of the previous paragraph. 
Let $l$ be the largest number such that $\dim T_l\ge 1$, $V_l \dashrightarrow V_{l+1}$ is a divisorial contraction, say 
contracting a divisor $R$, and $T_l\subset R$. 

We claim that $T_{l+1}$ is either a point or a rational curve. 
This is obvious if $\dim T_{l+1}=0$, so we can assume $\dim T_{l+1}=1$.
Then $V_{l+1}\dasharrow V_N$ is not an isomorphism near the generic point of $T_{l+1}$ because 
$T_{l+1}$ is a curve while $T_N$ is a point. There is 
$k\ge l+1$ such that $V_{l+1} \dashrightarrow V_{k}$ is an isomorphism 
near the generic point of $T_{l+1}$ but $V_{l+1} \dashrightarrow V_{k+1}$ is not. 
In particular, this means that $T_k$ is contained in the indeterminancy locus of $V_k\bir V_{k+1}$. 
Thus, by our choice of $l$, $V_k \dashrightarrow V_{k+1}$ cannot be a divisorial contraction because 
$\dim T_k=1$. Therefore, $V_k \dashrightarrow V_{k+1}$ is a flip and $T_{k}$ is a component of the exceptional 
locus of the corresponding flipping contraction. But then $T_{k}$ is a rational curve by a 
variant of the cone theorem \cite{Sh96}, 
or by \cite{HM07} noting that $T_k$ is a component of some fibre of the flipping contraction, 
hence $T_{l+1}$ is also a rational curve because $V_{l+1}\bir V_k$ 
induces a birational map $T_{l+1}\bir T_k$.\\

\emph{Step 6.}
Assume that $\dim T_l=2$. Then $T_l=R$ is the exceptional divisor of $V_l\bir V_{l+1}$ and it is a 
component of $\rddown{\Gamma_l}$. By the previous step, $T_{l+1}$ is either a point or a rational curve. 
If it is a point, then $T_l$ is rationally connected 
by the previous step and we can take $C=\PP^1$. If it is a curve, then the general fibres of 
$T_l\to T_{l+1}$ are $\mathbb{P}^1$ as $-(K_{V_l}+\Gamma_l)$ is ample over $V_{l+1}$ which implies 
$T_l$ is of Fano type over $T_{l+1}$, 
hence again $T_l$ is rationally connected and we are done by taking $C=\PP^1$ (note that here we are 
using the fact that $T_l\to T_{l+1}$ is a contraction which follows from the facts that $V_l\to V_{l+1}$ 
is a contraction and that $T_l$ is the exceptional locus). 
\\

\emph{Step 7.}
We then assume that $\dim T_l=1$.   
We claim that the gonality of $T_l$ is bounded, and if $t>\frac{1}{2}$, the genus of 
the normalization of $T_l$ is bounded. 
By assumption, $T_l$ is contained in the exceptional divisor $R$ which is a component of $\Gamma_l$ with coefficient $1$. 
Since $-(K_{V_l}+\Gamma_l)$ is ample over $V_{l+1}$, there is $\Theta\ge \Gamma_l$ such that $(V_l,\Theta)$ 
is dlt and $K_{V_l}+\Theta\sim_\R 0/V_{l+1}$. Let $M$ be the normalization of the image of 
$R$ in $V_{l+1}$. Then $R\to M$ is a contraction.  
Applying adjunction 
$$
K_R+\Theta_R:=(K_{V_l}+\Theta)|_R\sim_\R 0 /M,
$$ 
so $(R,\Theta_R)\to M$ is a log Calabi-Yau fibration. 
Moreover, since 
$$
a(D,V_l,\Theta)\le a(D,V_l,\Gamma_l)< 1-u,
$$ 
the coefficient of $T_l$ in $\Theta_R$ 
is $> u$ (this can be seen similar to Step 3 by extracting $D$ via an extremal contraction $V_l'\to V_l$ 
and by restricting to the birational transform of $R$ on $V_l'$). 
Now if $T_l$ is contracted to a point on $M$, then we can apply Theorem \ref{t-general-lcy-dim-2} 
to deduce that the gonality of $T_l$ is bounded, and in case $u>\frac{1}{2}$, the genus of 
the normalization of $T_l$ is bounded. 
\\

\emph{Step 8.}
It remains to treat the case $\dim T_l=\dim T_{l+1}=1$. In this  
case, $T_{l+1}$ is the image of $R$ in $V_{l+1}$, hence 
$M$ is the normalization of $T_{l+1}$.
Since $(R,\Theta_R)\to M$ is a log Calabi-Yau fibration and since $T_l$ is horizontal over $T_{l+1}$, 
taking the intersection number of $\Theta_R>uT_l$ with the general fibres $G$ of $R\to M$ we see that 
$uT_l\cdot G<2$, so the degree of $T_l\to M$ is bounded, hence the gonality of $T_l$ is bounded as claimed.

From now on we assume $t>\frac{1}{2}$ in which case $u>\frac{1}{2}$. 
We prove that in this case the morphism $T_l\to T_{l+1}$ is in fact birational. 
Since $T_{l+1}$ is a curve but $T_N$ is a point, $T_{l+1}$ is contained in the exceptional locus of 
$V_{l+1}\to  Y$. Moreover, since $Y$ is $\Q$-factorial, the exceptional locus is the union of the 
prime exceptional divisors, hence  
$T_{l+1}$ is contained in some exceptional prime divisor of $V_{l+1}\to  Y$, say $S$.  
By construction of the MMP in Step 4, $\rho(V_l/V_{l+1})=1$ and $S$ is a component of $\rddown{\Gamma_{l+1}}$. 
Let $S'$ on $V_l$ be the birational transform of $S$. 
It follows that $S'$ is ample over $V_{l+1}$ and hence it intersects each fibre of the contraction $V_l\to V_{l+1}$.
Then $S'|_R$ gives a component $J$ of $\Theta_R$ with coefficient $1$ which maps onto $T_{l+1}$. 

Assume $J\neq T_l$. Noting that $\Theta_R\ge J+uT_l$ and that $u > \frac{1}{2}$, and then 
considering the intersection numbers 
$$
2=-K_R\cdot G=\Theta_R\cdot G\ge (J+uT_l)\cdot G
$$ 
where $G$ is a general fibre of $R\to M$, we see that 
$T_l\cdot G=1$ as $J\cdot G\ge 1$. 
We conclude that the degree of $T_l\to T_{l+1}$ is $1$. In other words, $T_l\to T_{l+1}$ is birational, and hence $T_l$ is
a rational curve as $T_{l+1}$ is a rational curve.

Now assume $J=T_l$. Applying adjunction $K_R+\Gamma_R:=(K_{V_l}+\Gamma_l)|_R$, the coefficient of 
$T_l$ in $\Gamma_R$ is $1$ because $\Gamma_l\ge R+S'$ and $J=T_l$ is a component of $S'|_R$. Since $-(K_{V_l}+\Gamma_l)$ is ample over $V_{l+1}$, 
we see that 
$$
-2+T_l\cdot G=(K_R+T_l)\cdot G\le (K_R+\Gamma_R)\cdot G<0,
$$ 
hence 
$T_l\cdot G=1$ which again means that $T_l\to T_{l+1}$ is birational and that $T_l$ is a rational curve.

\end{proof}

\begin{proof}(of Theorem \ref{the_theorem-general})
Since $(X,B)$ is dlt and $-(K_X+B)$ is ample over $Z$,
there is $\Delta\ge B$ such that $(X,\Delta)\to Z$ is a log Calabi-Yau fibration. 
In view of Theorem \ref{the_theorem-general-lcy}, it is enough to show that 
$X$ is of Fano type over $Z$. 

Since $(X,B)$ is dlt, by definition, there is a non-empty open subset $U\subset X$ such that 
$(U,B|_U)$ is log smooth and $U$ contains all the non-klt centres of $(X,B)$. Pick a Cartier 
divisor $A$ such that $\mathcal{O}_X(A)$ is an ample invertible sheaf on 
 $X$ (not just relatively over $Z$). Then $\mathcal{O}_X(\rddown{B}+mA)$ is generated 
by global sections for $m\gg 0$. In particular, if $m$ is large enough, then 
for each $x\in U$, there is a section of $\mathcal{O}_X(\rddown{B}+mA)$ not vanishing at $x$ 
because $X$ is smooth at $x$ so $\rddown{B}+mA$ is Cartier at $x$. 
Then the zero divisor of this section gives a divisor $0\le N\sim \rddown{B}+mA$ not passing through 
$x$. Therefore, there is a general $0\le M\sim \rddown{B}+mA$ not containing any non-klt 
centre of $(X,B)$. 

With $A,m,M$ as above, let $\epsilon>0$ be a sufficiently small number, and let 
$$
\Gamma=B-\epsilon \rddown{B}+\epsilon M.
$$ 
Then $(X,\Gamma)$ is klt as can be seen by taking a 
log resolution: indeed, let $W\to X$ be a log resolution and let $K_W+B_W,L_W$ be the pullback of 
$K_X+B, -\rddown{B}+ M$, respectively. Then the pullback of $K_X+\Gamma$ is 
$K_W+B_W+\epsilon L_W$. Since $\epsilon$ is small, any component $D$ of $B_W+\epsilon L_W$ 
with coefficient $\ge 1$ is a component of $B_W$ with coefficient $1$. Thus the centre of $D$ on $X$, say $V$,  
is a non-klt centre of $(X,B)$, so it is not contained in $M$. But then $V|_U$ should be a non-klt centre of 
$(U,(B-\epsilon \rddown{B})|_U)$ which is not possible.

On the other hand, we can make sure that 
$$
-(K_X+\Gamma)=-(K_X+B-\epsilon \rddown{B}+\epsilon M)\sim_\Q -(K_X+B+\epsilon mA)
$$ 
is ample over $Z$ by taking $\epsilon$ to be small enough. Therefore, $X$ is of Fano type over $Z$ 
as desired.

\end{proof}

\begin{proof}(of Theorem \ref{the_theorem})
This follows from Theorem \ref{the_theorem-general}.
\end{proof}



\


\def\cprime{$'$} \def\mathbb#1{\mathbf#1}

\Addresses 

\end{document}